\newcommand*{\magenta}[1]{\textcolor{magenta}{#1}}
\newtheorem{theorem}{Theorem}
\newtheorem{lemma}[theorem]{Lemma}
\newtheorem{corollary}[theorem]{Corollary}
\newtheorem{proposition}[theorem]{Proposition}
\newtheorem{obs}[theorem]{Observation} \newtheorem{defi}[theorem]{Definition}
\newenvironment{definition}{\begin{defi}\rm}{\end{defi}}
\newtheorem{exa}[theorem]{Example}
\newenvironment{example}{\begin{exa}\rm}{\end{exa}}
\newtheorem{rem}[theorem]{Remark}
\newenvironment{remark}{\begin{rem}\rm}{\end{rem}}
\newtheorem{rems}[theorem]{Remarks}
\newtheorem{ack}[theorem]{Acknowlegment}
\def\H{\mathcal H}
\def\HS{\mathroman{HS}}
\def\L{\mathcal L}
\def\K{\mathcal K}
\def\M{\mathcal M}
\def\F{\mathcal F}
\def\ZZ{{\mathbf Z}}
\def\RRR{{\mathbf R}}
\def\FF{\mathbf F}
\def\RR+{{\mathbf R}^*}
\def\KK{\mathbf K}
\def\kk{\mathbf k}
\def\GG{\mathbf G}
\def\Un{\mathbf 1}
\def\HS{\rm HS}
\def\Q_p{{\mathbf Q}_p}
\def\ind{{\rm Ind}}
\def\eps{\varepsilon}
\def\vfi{\varphi}
\def\Aut{{\rm Aut}}
\def\ind{{\rm Ind}}
\def\Ad{{\rm Ad}}
\def\Irr{{\mathrm {Irr}}}
\newcommand{\Ind}{\operatorname{Ind}}
\def\tout{\qquad\text{for all}\quad}
\newcommand{\N}{\mathbf N}
\newcommand{\UU}{\mathbf U}
\newcommand{\LL}{\mathbf L}
\newcommand{\MM}{\mathbf M}
\begin{document}

\title[Unitary representations of algebraic groups]{On  unitary representations of algebraic groups over local fields}

\address{Bachir Bekka \\ Univ Rennes \\ CNRS, IRMAR--UMR 6625\\
Campus Beaulieu\\ F-35042  Rennes Cedex\\
 France}
\email{bachir.bekka@univ-rennes1.fr}

\address{Siegfried Echterhoff\\ Mathematisches Institut \\Universit\"at M\"unster \\
Einsteinstra\ss e 62, \\  D-48149 M\"unster\\
 Germany}
\email{echters@uni-muenster.de}
\author{Bachir Bekka}
\author{Siegfried Echterhoff}

\thanks{The first author acknowledges the support  by the ANR (French Agence Nationale de la Recherche)
through the projects Labex Lebesgue (ANR-11-LABX-0020-01) and GAMME (ANR-14-CE25-0004). 
This research was also funded by the Deutsche Forschungsgemeinschaft (DFG, German Research Foundation) under Germany's Excellence Strategy EXC 2044--390685587, Mathematics M\"unster: Dynamics--Geometry--Structure.}
\begin{abstract}
Let $\GG$ be an algebraic group over a local field $\kk$ of characteristic zero.
We show that the locally compact group $\GG(\kk)$ consisting of the $\kk$-rational points of  $\GG$ is of type I. Moreover, we complete Lipsman's characterization
of  the groups $\GG$  for which every irreducible
unitary representation  of $\GG(\kk)$ is a  CCR representation and
 show at the same time that such  groups $\GG(\kk)$ are trace class 
in the sense of \cite{Deitmar-vanDijk}.
\end{abstract}

\maketitle
\section{Introduction}
\label{S0}

Given a second countable locally compact group $G,$ a problem of major interest is to determine the \textbf{unitary dual space} $\widehat{G}$ of $G,$ that is, the set of equivalence classes of irreducible unitary representations  of $G$. 
The space $\widehat{G}$ carries a natural Borel structure, called the Mackey Borel structure, which is defined as follows (see \cite[\S 6]{Mackey-Borel} or \cite[\S 18.6]{Dixm--C*}). For every $n\in \{1, 2, \dots,\infty\},$
let $\Irr_n(G)$ denote the space of all irreducible unitary representations of $G$ in a fixed Hilbert
space $\H_n$ of dimension $n.$ The set $\Irr_n (G)$ is equipped with the weakest Borel structure  for which the functions $\pi\mapsto\langle \pi(g) \xi, \eta\rangle$
are measurable  for all $\xi, \eta\in \H_n$; the  disjoint union 
$\Irr(G)= \dot\cup_{n}\Irr_n (G)$ is endowed with the sum  Borel structure of the 
Borel structures of the $\Irr_n (G)$'s and $\widehat{G}$ with the quotient structure for the canonical surjective map $\Irr(G)\to\widehat{G}$.

A classification of  $\widehat{G}$ is only possible if  $\widehat{G}$ 
  is countably separated, that is, if there exists
a sequence  of Borel subsets of  $\widehat{G}$ which separates the points of 
$\widehat{G}.$  By a result of Glimm (\cite{Glimm}),  a group $G$ as above has this last property  if and only if $G$ is of type I in the sense of the following definition.

Recall that {a} von Neumann algebra  is a selfadjoint subalgebra of $\L(\H)$ which is closed for the weak operator topology of $\L(\H)$, where $\H$  is a  Hilbert space.
 A von Neumann algebra $\M$ is a factor if  the center of $\M$  consists  of the scalar operators. A unitary representation $\pi$ of $G$ in a Hilbert space $\H$ is a factor representation if the von Neumann subalgebra $W^*_\pi$  of $\L(\H)$ generated by $\pi(G)$ in $\L(\H)$ is a factor.

\begin{definition}
\label{TypeIGroup}
A locally compact group $G$ is of \textbf{type I} if, for every  factor representation
$\pi$ of $G$, the factor $W^*_\pi$ is of type I, that is, $W^*_\pi$ is isomorphic
to the von Neumann {algebra} $\L(\K)$ for some Hilbert space $\K;$ equivalently, if
$\pi$ is equivalent to a multiple $n\sigma$ of an irreducible unitary representation 
$\sigma$ of $G$ for some $n\in  \{1, 2, \dots,\infty\}.$
\end{definition}

The main result of   this note is to establish  the type I property for algebraic groups. 
Let $\KK$ be an algebraically closed field. A   linear algebraic group over
$\KK$ is a  Zariski closed subgroup of $GL_n(\KK)$  for some $n\geq 1.$
If $\kk$ is a subfield of $\KK$ and if $\GG$ is defined over $\kk$,
the group  {of} $\kk$-rational points of  $\GG$ is the subgroup 
$\GG(\kk)= \GG\cap GL_n(\kk).$
By {a} local field, we mean a non discrete locally compact field.
\begin{theorem}
\label{MainTheorem}
Let $\kk$ be a local field of characteristic $0$ and let $\GG$ be a linear algebraic group defined over $\kk$. 
The locally compact group $\GG(\kk)$ is of type I.
\end{theorem}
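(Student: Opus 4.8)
The plan is to reduce the statement to two classes of groups whose type I property is already available — reductive groups and abelian vector groups — and then to climb back up the structure of $\GG$ by repeated use of Mackey's normal subgroup analysis, the algebraic structure being what guarantees that the hypotheses of Mackey's machine hold. First I would reduce to the case where $\GG$ is connected. Since $\GG/\GG^{\circ}$ is finite, the quotient $\GG(\kk)/\GG^{\circ}(\kk)$ embeds into the finite group $(\GG/\GG^{\circ})(\kk)$, so $\GG^{\circ}(\kk)$ is an open subgroup of finite index in $\GG(\kk)$; as the type I property is inherited by a locally compact group from an open subgroup of finite index, I may assume $\GG$ connected and argue by induction on $\dim\GG$.

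\emph{Base case.} If the unipotent radical $\UU=R_u(\GG)$ is trivial, then $\GG$ is reductive and $\GG(\kk)$ is of type I: for $\kk=\RRR$ this is Harish-Chandra's theorem that real reductive groups are CCR, and for non-archimedean $\kk$ it is Bernstein's theorem that reductive groups over a $p$-adic field are of type I.

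\emph{Inductive step.} Suppose $\UU\neq\{1\}$ and put $\ZZ=\ZC(\UU)$, the center of $\UU$. Because $\kk$ has characteristic zero, $\ZZ$ is a $\kk$-split vector group, so $N:=\ZZ(\kk)\cong\kk^{d}$ is an abelian, hence type I, closed normal subgroup of $G:=\GG(\kk)$, and $\hN$ is identified with $\kk^{d}$ after fixing a nontrivial character of $(\kk,+)$. The conjugation action of $G$ on $N$ is the restriction to $\kk$-points of a $\kk$-rational linear action of $\GG$ on $\ZZ$; dually, $G$ acts on $\hN\cong\kk^{d}$ through a $\kk$-rational linear representation. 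By Borel's theorem, orbits of a $\kk$-rational algebraic action are locally closed, and the corresponding $G$-orbits on $\hN$ are then locally closed in the Hausdorff topology; hence the orbit space $G\backslash\hN$ is countably separated, that is, the action of $G$ on $\hN$ is smooth. Mackey's theory now reduces the type I property of $G$ to the following statement: for every $\chi\in\hN$ the stabilizer $G_\chi$, equipped with its Mackey obstruction cocycle $\omega_\chi$, has type I $\omega_\chi$-twisted representation theory, i.e. the twisted group $C^*$-algebra $C^*(G_\chi,\omega_\chi)$ is of type I.

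The stabilizer $G_\chi$ is the group of $\kk$-points of the algebraic stabilizer $\GG_\chi$, and the $\chi$-representations of $G_\chi$ are exactly the $\omega_\chi$-projective representations of $G_\chi/N$. Since $\ZZ$ is a vector group the natural map $G_\chi/N\to(\GG_\chi/\ZZ)(\kk)$ is an isomorphism (the relevant Galois cohomology $H^{1}(\kk,\ZZ)$ vanishes), and $\dim(\GG_\chi/\ZZ)\le\dim\GG-d<\dim\GG$, so the untwisted part is controlled by the induction hypothesis applied to the algebraic group $\GG_\chi/\ZZ$. I expect the main obstacle to be the \emph{twisting}: the Mackey cocycle $\omega_\chi$ need not be trivial, so the induction hypothesis must be upgraded to a statement about twisted group $C^*$-algebras $C^*(H,\omega)$ for groups $H$ of $\kk$-points of algebraic groups. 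I would handle this by realizing $\omega_\chi$ through a central extension that is again of algebraic origin, so that the $\omega_\chi$-projective representations become genuine representations of a slightly larger algebraic $\kk$-group to which the induction applies; verifying that this central extension is itself the group of $\kk$-points of an algebraic $\kk$-group is the delicate point on which the whole argument turns.
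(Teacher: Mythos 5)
Your skeleton coincides with the paper's: reduction to connected $\GG$, induction on $\dim\GG$, Mackey analysis relative to an abelian normal subgroup inside the unipotent radical, with the regular embedding supplied by local closedness of orbits of algebraic actions (Theorem~\ref{Theo-ActAlg}, Lemma~\ref{Lem-RegEmbedded}). The genuine gap sits exactly at the point you yourself flag as ``the delicate point on which the whole argument turns,'' and it is not a verification that can be carried out: it is false. After the further reduction steps (which your proposal skips, but which are needed to identify the cocycle at all) one may assume that $\UU$ is a Heisenberg group $H_{2n+1}$ and that $\chi$ is a nontrivial character of its center with $G_\chi=G$; the Stone--von Neumann and Segal--Shale--Weil theorems (Theorem~\ref{Theo-StoneVN-Weil}) then identify the Mackey obstruction: the central extension realizing $\omega_\chi$ is the pullback to the Levi subgroup of the metaplectic twofold cover $\widetilde{Sp_{2n}(\kk)}\to Sp_{2n}(\kk)$. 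For non-archimedean $\kk$ this cover is genuinely non-algebraic: $Sp_{2n}$ is simply connected as an algebraic group, so it admits no nontrivial algebraic central isogeny, and $\widetilde{Sp_{2n}(\kk)}$ has no faithful finite-dimensional representation, hence is not the group of $\kk$-points of any linear algebraic group. So your plan --- make the extension algebraic and invoke the induction hypothesis --- cannot be executed. Nor can the issue be dodged by a soft argument: a finite central extension of a type I group need not be type I (Example~\ref{HeisenbergFp}).

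What closes this gap in the paper is a new ingredient lying outside the induction, namely an extension of Bernstein's theorem to finite central covers $\widetilde G$ of reductive $p$-adic groups satisfying the commutator-lifting condition $(*)$ (which the metaplectic group satisfies, by \cite{MVW}). Concretely, Proposition~\ref{Pro-Covering} transports the Bruhat--Tits structure of $G$ (Cartan decomposition and arbitrarily small compact open subgroups with Iwahori-type factorization compatible with the semigroup $S$) to the cover $\widetilde G$, so that Bernstein's Assertion A applies to $\widetilde G$; Proposition~\ref{Pro-AdmissibleCover} redoes the cuspidal-support analysis to show every irreducible smooth representation of $\widetilde G$ is admissible; together these yield a uniformly admissible dual, whence $\widetilde G$ is trace class, in particular type I (Corollary~\ref{Cor-Pro-Covering}). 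In the archimedean case the corresponding statement is classical, since a finite cover of a reductive Lie group is again a reductive Lie group covered by Harish-Chandra's theorem --- which is why the difficulty is invisible over $\RRR$ and why Dixmier's original scheme, which you have essentially reproduced, does not by itself settle the $p$-adic case. Your induction becomes a proof only once this covering-group theorem, or some substitute for it, is supplied.
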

Some comments on Theorem~\ref{MainTheorem}\  
are in order.  
\begin{remark}
\label{Rem-MainTheorem}
\begin{enumerate}
\item[(i)] The case where $\GG$ is reductive   is due to Harish-Chandra (\cite{HC}) 
for $\kk=\RRR$ and to Bernstein (\cite{Bernstein}) for non archimedean $\kk$. 
\item[(ii)] In the case $\kk=\RRR$,  Theorem~\ref{MainTheorem} is a result  due to  Dixmier (\cite{Dixmier}).
\item[(iii)] Duflo (\cite{Duflo}) describes a general procedure to determine the unitary 
dual for a group $\GG(\kk)$ as in Theorem~\ref{MainTheorem}.
As is known to experts, one could  derive the fact that $\GG(\kk)$ is type I  from  Duflo's results, provided  Bernstein's result is extended to certain  finite central extensions of reductive groups.
This note offers  a direct approach, independent of \cite{Duflo}.
Indeed, our proof of Theorem~\ref{MainTheorem} closely follows the main strategy 
 in the case $\kk=\RRR$ from \cite{Dixmier} (see also \cite{Pukanszky-book}).
 In particular, we obtain a new proof of the known fact (\cite{Dixmier-Nilpotent}, \cite{Kirillov}, \cite{Moore}) that $\GG(\kk)$ is of type I when $\GG$ is unipotent.
 As a necessary preliminary step,  we  establish  the extension mentioned
 above of Bernstein's result   to  certain  finite central extensions of reductive groups
 (see Subsection~\ref{SSect:Covering}).
  \item[(iv)] Assume that $\kk$ is a local field of positive characteristic. 
 Then $\GG(\kk)$ is still of type I when $\GG$ is reductive  (\cite{Bernstein}).
To extend the result to an arbitrary  $\GG,$ the  major difficulty to overcome is to show that $\GG(\kk)$ is of type I when  $\GG$ is unipotent, a fact {which -- to our knowledge -- is} unknown.
\end{enumerate}
\end{remark}
Let $G$ be a  locally compact group.
Recall that, if $\pi$ is a unitary representation of $G$ on a Hilbert space $\H,$  then $\pi$ extends to a representation by bounded operators on $\H$ of the convolution algebra $C_c(G)$ of continuous functions with compact support on $G,$ 
defined by 
$$
\pi(f)=\int_G f(x) \pi(x) dx \tout f\in C_c(G),
$$
where $dx$ denotes a Haar measure on $G.$ 
The representation $\pi$ is said to be a  CCR representation if $\pi(f)$ is  a
compact operator for every $f\in C_c(G).$ The group $G$ is a 
\textbf{CCR group} if every $\pi\in \widehat{G}$ is a CCR representation.
It follows from  \cite[Theorem 9.1]{Dixm--C*} that every CCR group is type I.

Let $C_c^\infty(G)$ be the space of test functions on $G$ as defined 
in \cite{Bruhat}. Following \cite{Deitmar-vanDijk}, we
say that the representation $\pi$ of $G$ is of trace class if $\pi(f)$ 
is a trace class operator  for every $f\in C_c^\infty(G).$
The  group $G$ is called a \textbf{trace class}  group, if every irreducible unitary representation  of $G$ is trace class. Since $C_c^\infty(G)$ is dense in 
$L^1(G,dx),$ it is clear that a trace class group is CCR, and hence
of type I.

Let $\GG$ and $\kk$ be as in Theorem~\ref{MainTheorem}.
It is known that $\GG(\kk)$ is  a CCR group, if 
$\GG$ is reductive (see \cite{HC} and \cite{Bernstein})
or if $\GG$ is unipotent (see \cite{Dixmier-Nilpotent}, \cite{Kirillov}, \cite{Moore}).
This result has been strengthened in \cite{Deitmar-vanDijk} by showing 
 that $\GG(\kk)$ is a trace class group in these cases. 

Let $\UU$ be the unipotent radical of $\GG$ and $\LL$ a Levi subgroup defined
over $\kk;$ so, we have  a semi-direct product decomposition $\GG=\LL\UU.$
Let 
$$
\MM=\{g\in \LL\mid gu=ug \quad \text{for all}\quad u\in U\}.
$$
Then $\MM$ is an algebraic normal subgroup of $\LL$ defined over $\kk.$
Lipsman (\cite{Lipsman-CCR}) showed that, if $\GG(\kk)$ is CCR, then 
$\LL(\kk)/\MM(\kk)$ is compact. 
Our second  result concerns the converse statement
(see also the comment immediately after Theorem 3.1 in \cite{Lipsman-CCR}).
\begin{theorem}
\label{SecondTheorem}
Let  $\GG$ and $\kk$ be as in Theorem~\ref{MainTheorem}.
Let $\LL$ and $\MM$ be as above. The following properties are equivalent:
\begin{itemize}
\item[(i)] $\GG(\kk)$ is a CCR group;
\item[(ii)]   $\LL(\kk)/\MM(\kk)$ is compact;
\item[(iii)] $\GG(\kk)$ is trace class.
\end{itemize}
\end{theorem}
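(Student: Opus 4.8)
The three conditions will be established in the cycle (i)$\Rightarrow$(ii)$\Rightarrow$(iii)$\Rightarrow$(i). Of these, (iii)$\Rightarrow$(i) is immediate, since every trace class group is CCR, and (i)$\Rightarrow$(ii) is precisely Lipsman's theorem \cite{Lipsman-CCR} quoted above. All the content therefore lies in the implication (ii)$\Rightarrow$(iii), and this is exactly where the type I property of $\GG(\kk)$ (Theorem~\ref{MainTheorem}) enters decisively, by making Mackey's machine available.

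The plan for (ii)$\Rightarrow$(iii) is to work with the normal subgroup $N:=\MM(\kk)\UU(\kk)$ of $G:=\GG(\kk)$. Since $\MM$ centralises $\UU$ and $\LL\cap\UU=\{e\}$ in the Levi decomposition, one has $\MM\UU\cong\MM\times\UU$ as $\kk$-groups, so $N=\MM(\kk)\times\UU(\kk)$; as $\MM$ is normal in $\LL$ and $\UU$ is normal in $\GG$, $N$ is normal in $G$, and $G/N\cong\LL(\kk)/\MM(\kk)$ is compact by (ii). Here $\MM$ is reductive, being a normal subgroup of the reductive group $\LL$, so $\MM(\kk)$ is trace class by Harish-Chandra and Bernstein together with \cite{Deitmar-vanDijk}, while $\UU(\kk)$ is trace class because $\UU$ is unipotent; hence $N$ is trace class, being a direct product of two trace class groups. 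The theorem thus reduces to the following key lemma, which I would isolate and prove in general: \emph{if a second countable type I locally compact group $G$ has a closed normal trace class subgroup $N$ with $G/N$ compact, then $G$ is trace class.}

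To prove the key lemma, fix $\rho\in\widehat G$ and $f\in C_c^\infty(G)$. As $N$ and $G$ are both type I, Mackey's machine applies: $\rho|_N$ is supported on a single $G$-orbit in $\widehat N$, say that of $\sigma$, whose stabiliser $G_\sigma$ contains $N$, and $\rho\cong\Ind_{G_\sigma}^G\widetilde\sigma$ for an irreducible $\widetilde\sigma\in\widehat{G_\sigma}$ lying over $\sigma$. Since $N\subseteq G_\sigma$, both $G_\sigma/N$ and $G/G_\sigma$ are quotients of the compact group $G/N$, hence compact. The argument then proceeds in two steps, each an integration over a compact base. First, $\widetilde\sigma$ is trace class on $G_\sigma$: here $\sigma$ is $G_\sigma$-invariant, so its Mackey multiplicity space is a projective representation of the compact group $G_\sigma/N$, hence finite dimensional; thus $\widetilde\sigma|_N$ is a \emph{finite} multiple of $\sigma$, and integrating $\widetilde\sigma(\phi)$ first over $N$ and then over the compact quotient $G_\sigma/N$ yields $\widetilde\sigma(\phi)$ trace class for every $\phi\in C_c^\infty(G_\sigma)$, using that $\sigma(\cdot)$ is trace class. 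Second, realising $\rho=\Ind_{G_\sigma}^G\widetilde\sigma$ on $L^2$-sections over the compact homogeneous space $G/G_\sigma$, the operator $\rho(f)$ becomes an integral operator whose operator-valued kernel takes the diagonal values $k(x,x)=\widetilde\sigma(f_x)$ for suitable $f_x\in C_c^\infty(G_\sigma)$, trace class by the first step; the formula
\[
\Tr\,\rho(f)=\int_{G/G_\sigma}\Tr\,\widetilde\sigma(f_x)\,dx
\]
then converges because the integrand is continuous and $G/G_\sigma$ is compact, giving $\rho(f)$ trace class.

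The hard part is the second step: one must show that $\rho(f)$ is genuinely of trace class, and not merely Hilbert--Schmidt. Because $\H_{\widetilde\sigma}$ is infinite dimensional the kernel is operator-valued, and the naive idea of integrating $\rho(\cdot)$ over $N$ first fails outright -- $\rho|_N(\phi)$ is a decomposable operator over the typically continuous orbit $G/G_\sigma$ and is therefore never trace class -- which is exactly why the invariant case (finite multiplicity, point orbit) of the first step behaves so differently from the full representation. The correct argument must use the off-diagonal integral-operator structure on the compact base $G/G_\sigma$, together with the trace class values supplied by the first step, and must control the dependence $x\mapsto k(x,y)$ in the trace norm finely enough for a Mercer-type estimate to produce a nuclear operator. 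The finite dimensionality of the Mackey multiplicity space and the compactness of $G/G_\sigma$ are precisely the two features that make this control possible.
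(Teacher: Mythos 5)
Your reduction is the same as the paper's: under (ii) the group $N=\MM(\kk)\UU(\kk)\cong\MM(\kk)\times\UU(\kk)$ is a normal trace class subgroup with $G/N$ compact, and everything comes down to showing that trace class passes to compact extensions. (One simplification you could make at the outset: the paper handles non-archimedean $\kk$ in one line, since a totally disconnected CCR group is automatically trace class by admissibility of $\pi(e_K)$; and (ii)$\Rightarrow$(i) is already covered by Schochetman's result, so only $\kk=\RRR$ requires the kernel analysis. Also, Corollary~\ref{lem-subrep} lets you induce in a single step from $N$ itself -- every $\rho\in\widehat G$ is a subrepresentation of some $\Ind_N^G\pi$ with $\pi\in\widehat N$ -- so the detour through the stabilizer $G_\sigma$ and the finite-dimensionality of the Mackey multiplicity space is unnecessary.)

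The genuine gap is exactly the step you flag yourself as ``the hard part'': you never prove that $\rho(f)$ is trace class rather than merely Hilbert--Schmidt. The assertion that $\Tr\rho(f)=\int_{G/G_\sigma}\Tr\widetilde\sigma(f_x)\,dx$ ``converges because the integrand is continuous and $G/G_\sigma$ is compact, giving $\rho(f)$ trace class'' is not valid reasoning: integrability (or even continuity) of the diagonal of a kernel does not imply nuclearity of the integral operator, and Mercer-type theorems require positivity or smoothness hypotheses you have not established; your closing paragraph correctly diagnoses that a real argument is needed but does not supply one. The missing idea is the Dixmier--Malliavin factorization theorem: $C_c^\infty(G)=C_c^\infty(G)\ast C_c^\infty(G)$ \cite{DM}. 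With it, one only has to show that $\rho(f)$ is \emph{Hilbert--Schmidt} for every $f\in C_c^\infty(G)$ -- which the paper does by an explicit computation of $\|\tilde k_f\|_{\HS}$ over the compact base $G/N$, using that the operator-valued kernel $k_f(g,s)=\pi(\varphi_f(g,s))$ has Hilbert--Schmidt values (trace class values suffice for this) varying continuously and $N$-covariantly -- and then $\rho(f)=\rho(f_1)\rho(f_2)$ is a product of two Hilbert--Schmidt operators, hence trace class. Without this factorization trick, or an actual trace-norm estimate on the kernel, your argument does not close.
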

Theorem~\ref{SecondTheorem}\, generalizes
several results from \cite{vanDijk}  and provides a positive solution to a much stronger  version of  Conjecture 14.1 stated there.
\section{Some preliminary results}.
\label{S1}
\subsection{Projective representations}
\label{SS:ProjectiveRep}
 Let $G$ be a  second countable locally compact group.
We will  need the notion of a projective  representation of $G$.
For all what follows, we refer to  \cite{Mackey}.

Let $\H$ be a Hilbert space.  Recall that a  map $\pi: G \to U(\H)$
from $G$ to the unitary group   of  $\H$
is  a \textbf{projective representation} of $G$
if the following holds:
\begin{itemize}
 \item[(i)] $\pi(e)=I$,
\item[(ii)] for all $x,y\in G,$ there exists  $\omega(x , y )\in {\mathbf T} $
such that 
$$\pi(x y ) = \omega(x , y )\pi(x )\pi(y ),$$
\item[(iii)] the map  $g\mapsto \langle\pi(g)\xi,\eta\rangle$ is {Borel} for all
$\xi,\eta\in\H.$
\end{itemize}
The map  $\omega:G \times G \to {\mathbf T}$ has  the following properties: 
\begin{itemize}
 \item[(iv)]
$\omega(x,e)=\omega(e, x)$ for all $x\in G,$
\item[(v)]
 $\omega(xy, z)\omega(x, y)= \omega(x, yz)\omega(y, z)$ for all $x,y, z\in G.$
\end{itemize}
The set  $Z^2({G},  {\mathbf T})$  of all 
{Borel} maps
  $\omega:G \times G \to {\mathbf T}$ with properties (iv) and (v)  is 
an abelian group for the pointwise product.

 For a given $\omega\in Z^2(G, \mathbf T),$ a  map $\pi: G \to U(\H)$
 with properties (i), (ii) and (iii) as above is called an \textbf{$\omega$-representation}
 of $G.$

For $i=1,2,$ let  $\omega_i, \in Z^2(G, \mathbf T)$
and  $\pi_i$ an $\omega_i$-representation of $G$ on a Hilbert space $\H_i.$
Then $\pi_1\otimes \pi_2$ is an $\omega_1 \omega_2$-representation of $G$ on the Hilbert space $\H_1
 \otimes \H_2.$ In particular, if $\omega_2= \omega_1^{-1},$
then  $\pi_1\otimes \pi_2$  is an ordinary representation of $G.$

Every projective unitary representation of $G$ can be  lifted to 
an ordinary unitary representation of a central extension
of $G.$ More precisely, 
for $\omega\in Z^2(G, \mathbf T),$ let 
$S$ be the closure in $\mathbf T $ of the subgroup 
generated by  the image of $\omega$; 
define a group $G^{\omega}$ with underlying set $S \times G$
and multiplication $(s,x) (t,y) \, = \, (st\omega(x,y), xy)$.
Equipped with a suitable topology, $G^{\omega}$ is a locally compact group.
Let $\pi: G \to U(\H)$ be an $\omega$-representation of $G.$
Then $\pi^0:G^{\omega} \to U(\H),$ defined by $\pi^0(s,x)= s\pi(x)$
is an ordinary representation of $G^{\omega}$; moreover, the map
$\pi\to \pi^0$ is a bijection between the  $\omega$-representations of $G$
and the representations of $G^{\omega}$ for which the restriction of
the central subgroup $S\times\{e\}$ is a multiple of the one dimensional
representation $(s,e)\mapsto s.$

A projective representation $\pi$  of $G$ on $\H$ is irreducible
(respectively, a factor representation) if the only closed $\pi(G)$-invariant subspaces of $\H$ are $\{0\}$ and $\H$  (respectively, if the von 
Neumann algebra generated by $\pi(G)$ is a factor).
A factor projective representation $\pi$ is of type $I$,
if the factor  generated by $\pi(G)$ is of type $I.$

\subsection{Factor representations}
\label{SS:FactorRep}
 Let $G$ be a  second countable locally compact group.
Let $N$ be a {type I} closed normal subgroup of $G$.
Then $G$ acts by conjugation  on $\widehat{N}:$
for $\pi\in \widehat{N},$ the conjugate representation $\pi^g\in \widehat{N}$ 
is defined by $\pi^g(n)= \pi(gng^{-1}),$ for $g\in G, n\in N.$

Let $\pi\in \widehat{N}.$ The stabilizer 
$$
{ G}_\pi=\{g\in G\ :\ \pi^g\ \text{is equivalent to}\ \pi\}
$$ 
of $\pi$ is a closed subgroup of $G$ containing $N.$
 There exists a projective unitary representation $\widetilde\pi$
of ${G}_{\pi}$ on $\H$ which extends $\pi$. 
Indeed, for every
$g\in {G}_\pi$,  there exists a unitary
operator $\widetilde\pi(g)$ on $\H$ such that
$$
\pi^g(n)= \widetilde\pi(g) \pi(n) \widetilde\pi(g)^{-1} \tout n\in N.
$$
One can choose  $\widetilde\pi(g)$  
such that $g\mapsto \widetilde\pi(g)$ is an $\omega\circ (p\times p)$-{representation
 of ${ G}_\pi$} which extends $\pi$
for some $\omega \in Z^2(G/N, \mathbf{T}),$
where $p:G\to G/N$ is the canonical projection (see Theorem~8.2 in \cite{Mackey}).

 The normal subgroup $N$ is said to be \textbf{regularly embedded}\ in $G$ if
the quotient space $\widehat{N}/G$, equipped with the quotient Borel structure
inherited from $\widehat{N}$, is  a countably separated Borel space.
 
 The following result  is part of the so-called Mackey machine;
it is a basic tool which reduces  the determination of the factor
(or irreducible) representations of  $G$ to  the  determination of the factor
(or irreducible) representations of  subgroups of $G$.
The proof is a direct consequence 
of Theorems 8.1, 8.4 and 9.1 in \cite{Mackey}.

\begin{theorem}
\label{Mackey}
Let $N$ be a  closed normal subgroup of $G$ and denote by 
$p:G\to G/N$ the canonical projection.  Assume that $N$ is 
of type I and is regularly embedded in $G.$
\begin{itemize}
\item[(i)] Let $\pi\in \widehat{N}$ and $\pi'$
a factor   representation of ${G}_{\pi}$ such that the restriction of 
$\pi'$ to $N$ is a multiple of $\pi.$ 
Then the induced representation $\Ind_{G_\pi}^G \pi'$
is a factor representation. Moreover, $\Ind_{G_\pi}^G \pi'$ is of type I 
if and only if  $\pi'$ is of type I.
\item[(ii)]  Every  factor representation  of $G$ is equivalent to a 
representation  of the form $\Ind_{G_\pi}^G \pi'$ as in (i).
\item[(iii)] Let $\pi\in\widehat{N}$ and assume that $G=G_\pi.$
Let $\omega \in Z^2(G/N, \mathbf{T})$ and $\widetilde\pi$  {an}  $\omega\circ (p\times p)$-representation of ${ G}$ which extends $\pi$.
Then every   factor   representation $\pi'$ of ${G}$ such that the restriction of 
$\pi'$ to $N$ is a multiple of $\pi$ is equivalent to a representation of the form
$\sigma \otimes \widetilde\pi,$ where $\sigma$ is a factor  $\omega^{-1}$-representation
of $G/N$ lifted to $G.$ Moreover,  $ \pi'$ is of type I if and only if 
$\sigma$ is of type I.

\end{itemize}
\end{theorem}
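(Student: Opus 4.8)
The plan is to run Mackey's normal subgroup analysis: since $N$ is of type I and regularly embedded, all three assertions are instances of the general theory of \cite{Mackey} (Theorems 8.1, 8.4 and 9.1), and the real task is to translate that theory into the language of factor representations used here. I would organize the argument around the three parts, treating (ii) and then (i) by citing the machinery, and giving the short self-contained computation behind (iii) in full.

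For (ii), I would start with an arbitrary factor representation $\pi'$ of $G$ and disintegrate its restriction $\pi'|_N$ as a direct integral over $\widehat{N}$ against a measure $\mu$. Because $\pi'$ generates a factor, $\mu$ is ergodic for the conjugation action of $G$ on $\widehat{N}$; and because $N$ is regularly embedded, $\widehat{N}/G$ is countably separated, so by the theory of smooth Borel $G$-spaces an ergodic measure is carried, up to null sets, by a single orbit $G\cdot\pi$. Thus $\pi'$ is concentrated over one point $\pi\in\widehat{N}$, and Mackey's imprimitivity machinery identifies $\pi'$ with an induced representation $\Ind_{G_\pi}^G\pi'_0$, where $\pi'_0$ is a representation of $G_\pi$ whose restriction to $N$ is a multiple of $\pi$; running the same analysis inside $G_\pi$ shows $\pi'_0$ may be taken to be a factor representation, which is (ii). For (i) I would invoke the converse direction of the same theorems: inducing a factor representation $\pi'_0$ of $G_\pi$ lying over $\pi$ yields a factor representation of $G$, and the induction preserves the type of the factor, so $\Ind_{G_\pi}^G\pi'_0$ is of type I precisely when $\pi'_0$ is.

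Part (iii) admits a short argument that I would present directly. Assume $G=G_\pi$, realize $\pi'$ on $\mathcal{H}\otimes\mathcal{K}$ with $\pi'(n)=\pi(n)\otimes I$, and let $\widetilde\pi$ be the given $\omega\circ(p\times p)$-representation on $\mathcal{H}$ extending $\pi$. From
\[
\pi'(g)(\pi(n)\otimes I)\pi'(g)^{-1}=\pi(gng^{-1})\otimes I=(\widetilde\pi(g)\otimes I)(\pi(n)\otimes I)(\widetilde\pi(g)\otimes I)^{-1}
\]
one sees that $(\widetilde\pi(g)^{-1}\otimes I)\pi'(g)$ commutes with $\pi(N)\otimes I$, hence lies in its commutant $I\otimes\mathcal{L}(\mathcal{K})$ by irreducibility of $\pi$. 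Writing this operator as $I\otimes\sigma(g)$ gives $\pi'(g)=\widetilde\pi(g)\otimes\sigma(g)$. A direct computation then shows $\sigma(n)=I$ for $n\in N$, so $\sigma$ descends to $G/N$, and that $\sigma$ is an $\omega^{-1}$-representation there, the cocycles of $\widetilde\pi$ and $\sigma$ cancelling to make $\pi'$ an ordinary representation. Finally, since $\pi$ is irreducible the algebra $\pi(N)''=\mathcal{L}(\mathcal{H})\otimes I$ is contained in $W^*_{\pi'}$, whence $W^*_{\pi'}=\mathcal{L}(\mathcal{H})\,\overline{\otimes}\,W^*_\sigma$; its center is $I\otimes Z(W^*_\sigma)$, so $\pi'$ is a factor representation, respectively of type I, if and only if $\sigma$ is.

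The main obstacle is the reduction to a single orbit in (ii): passing from ergodicity of $\mu$ to concentration on one orbit is exactly where the regular-embedding hypothesis and a Glimm-type dichotomy are indispensable, and making this clean requires care with the Mackey Borel structure on $\widehat{N}$. The remaining difficulties are bookkeeping, namely matching the cocycle conventions in (iii) and checking that Mackey's formulation in terms of systems of imprimitivity yields the stated induction and type-preservation assertions. I would not expect the type-I transfer statements themselves to be hard once the structural decompositions $\Ind_{G_\pi}^G$ and $\mathcal{L}(\mathcal{H})\,\overline{\otimes}\,W^*_\sigma$ are in place, since both induction and tensoring with the type I factor $\mathcal{L}(\mathcal{H})$ preserve the type of a factor.
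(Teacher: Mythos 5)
Your proposal is correct and follows essentially the same route as the paper: the paper's entire proof of this theorem is the single remark that it is a direct consequence of Theorems 8.1, 8.4 and 9.1 of Mackey's \emph{Unitary representations of group extensions.\ I}, which is exactly the machinery you invoke. The extra detail you supply (the ergodic-measure/single-orbit reduction for (ii), where regular embedding enters, and the tensor-factorization $\pi'(g)=\widetilde\pi(g)\otimes\sigma(g)$ with $W^*_{\pi'}=\mathcal{L}(\mathcal{H})\,\overline{\otimes}\,W^*_\sigma$ for (iii)) is a faithful and accurate unpacking of those cited results.
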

We will need the well-known fact that being of type I is inherited from cocompact normal subgroups; 
 for the sake of completeness, we reproduce
a  short proof modeled after  \cite{Pukanszky-book}.

\begin{proposition}
\label{Prop-FiniteIndex}
Let $N$ be a normal subgroup  of $G$.
Assume that $N$ is of type I and that $G/N$ is compact. Then $G$ is  of type I.
\end{proposition}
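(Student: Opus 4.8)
The plan is to run the Mackey machine of Theorem~\ref{Mackey} relative to the normal subgroup $N$. Recall that $G$ (hence $N$) is second countable, and since $N$ is of type I its dual $\widehat{N}$ is a standard Borel space; thus the only hypothesis of Theorem~\ref{Mackey} that is not immediate is that $N$ be \emph{regularly embedded} in $G$. Deducing this from the compactness of $G/N$ is the main obstacle, and everything afterwards is Mackey-machine bookkeeping closed off by the Peter--Weyl theorem. Granting regular embedding, I would show that every factor representation of $G$ generates a type I factor by tracing through parts (ii), (i) and (iii) of Theorem~\ref{Mackey} until the problem is reduced to the representation theory of the compact group $G/N$.

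For the regular embedding, first observe that the conjugation action of $G$ on $\widehat{N}$ is trivial on $N$: for $\pi\in\widehat{N}$ and $n\in N$ the unitary $\pi(n)$ intertwines $\pi$ and $\pi^n$, so $\pi^n$ is equivalent to $\pi$. Hence this action descends to a Borel action of the compact group $G/N$ on the standard Borel space $\widehat{N}$. I would then invoke the fact that any Borel action of a second countable compact group on a standard Borel space is smooth: after replacing the Borel structure by a compatible Polish topology turning the action continuous, each orbit is the continuous image of the compact group $G/N$, hence compact and closed, and a continuous compact group action on a Polish space has countably separated orbit space. Consequently $\widehat{N}/G$ is countably separated, i.e. $N$ is regularly embedded in $G$.

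Now let $\rho$ be an arbitrary factor representation of $G$. By Theorem~\ref{Mackey}(ii) we may write $\rho\cong\Ind_{G_\pi}^G\pi'$ for some $\pi\in\widehat{N}$ and some factor representation $\pi'$ of the stabilizer $G_\pi$ whose restriction to $N$ is a multiple of $\pi$; by part (i), $\rho$ is of type I if and only if $\pi'$ is. Since $G_\pi$ is closed and contains $N$, the quotient $G_\pi/N$ is a closed subgroup of the compact group $G/N$, hence compact, and $N$ is regularly embedded in $G_\pi$ by the argument just given. Applying part (iii) of Theorem~\ref{Mackey} to the group $G_\pi$, for which the whole group equals the stabilizer of $\pi$, yields $\pi'\cong\sigma\otimes\widetilde\pi$, where $\widetilde\pi$ is an $\omega\circ(p\times p)$-representation of $G_\pi$ extending $\pi$ and $\sigma$ is a factor $\omega^{-1}$-representation of $G_\pi/N$, with $\pi'$ of type I if and only if $\sigma$ is.

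It therefore remains to see that every factor $\omega^{-1}$-representation $\sigma$ of the compact group $K=G_\pi/N$ is of type I. For this I would pass to the central extension $K^{\omega^{-1}}$ with underlying set $S\times K$: here $S$ is a closed, hence compact, subgroup of $\mathbf{T}$, so $K^{\omega^{-1}}$ is an extension of a compact group by a compact group and is itself compact. The lift $\sigma^0(s,x)=s\sigma(x)$ is an ordinary representation of $K^{\omega^{-1}}$ generating the same von Neumann algebra as $\sigma$, and by the Peter--Weyl theorem every compact group is of type I; hence $\sigma$, and with it $\pi'$ and $\rho$, is of type I. As $\rho$ was an arbitrary factor representation, $G$ is of type I.
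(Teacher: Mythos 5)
Your proposal is correct and follows essentially the same route as the paper: regular embedding of $N$ is deduced from compactness of $G/N$ by realizing the Borel action of $G/N$ on the standard space $\widehat{N}$ topologically (the paper cites Varadarajan's equivariant compactification, you cite the equivalent topological-realization fact), and then the Mackey machine reduces everything to projective factor representations of the compact quotient, which are type I because they lift to ordinary representations of a compact central extension. No gaps; the argument matches the paper's proof step for step.
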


\begin{proof}
Since $N$ is of type I,  the Borel space $\widehat{N}$ is countably separated,
by Glimm's result (mentioned in the introduction). The action of $G$ on $\widehat{N}$ factorizes to an action of the compact  group $K:= G/N.$

We claim that $\widehat{N}/K$ is countably separated, that is, $N$ is regularly 
embedded in $G.$ Indeed, 
by a theorem of Varadarajan (see \cite[2.1.19]{Zimmer}), there exists a compact 
metric space $X$ on which $K$ acts by homeomorphisms and an injective
$K$-equivariant Borel map $\widehat{N}\to X.$ Since $K$ is compact, $X/K$ is easily seen to be
 countably separated and the claim follows.

Let  $\pi\in \widehat{N}$ and {$\sigma$ a projective} factor representation 
of $G_\pi/N$. Then $\sigma$  lifts to an ordinary representation $\widetilde{\sigma}$
of a central extension $\widetilde{G_\pi}$ of $G_\pi/N$ by a closed subgroup of $\mathbf{T}.$ Since $G_\pi/N$ is compact, $\widetilde{G_\pi}$ is compact and therefore
of type I. So, $\widetilde{\sigma}$ and hence $\sigma$ is of type I.
Theorem~\ref{Mackey} shows that  $G$ is of type I.

\end{proof}

We will use the following  consequence of  Proposition~\ref{Prop-FiniteIndex} in the proof of Theorem~\ref{SecondTheorem}.
\begin{corollary}
\label{lem-subrep}
Let $G$ and $N$ be as in Proposition~\ref{Prop-FiniteIndex}.
For every $\rho\in \widehat{G},$ there exists a representation $\pi\in \widehat{N}$ such that $\rho$ is a subrepresentation  of $\ind_N^G\pi$.
\end{corollary}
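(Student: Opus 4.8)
The plan is to combine the Mackey machine with Frobenius reciprocity for the cocompact normal subgroup $N$. In the proof of Proposition~\ref{Prop-FiniteIndex} it was already shown that $N$ is type I and regularly embedded in $G$, so Theorem~\ref{Mackey} is available. Let $\rho\in\widehat G$. Being irreducible, $\rho$ is in particular a factor representation, so by Theorem~\ref{Mackey}(ii) there exist $\pi\in\widehat N$ and an irreducible representation $\pi'$ of $G_\pi$ with $\pi'|_N$ a multiple of $\pi$ such that $\rho$ is equivalent to $\Ind_{G_\pi}^G\pi'$. I will show that this same $\pi$ works, i.e.\ that $\rho$ is a subrepresentation of $\Ind_N^G\pi$.

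First I would record that $G_\pi/N$ is compact: since $N\subseteq G_\pi$ and $N$ is normal in $G$, the quotient $G_\pi/N$ is a closed subgroup of the compact group $K=G/N$, hence compact. In particular $N$ is a cocompact closed normal subgroup of $G_\pi$, and $G_\pi/N$ carries a finite invariant measure, so Frobenius reciprocity applies to the pair $(N,G_\pi)$. Since $\pi'|_N$ is a nonzero multiple of $\pi$, we have $\operatorname{Hom}_N(\pi'|_N,\pi)\neq 0$; Frobenius reciprocity then yields $\operatorname{Hom}_{G_\pi}(\pi',\Ind_N^{G_\pi}\pi)\neq 0$, and because $\pi'$ is irreducible any nonzero intertwiner is a multiple of an isometry. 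Thus $\pi'$ is a subrepresentation of $\Ind_N^{G_\pi}\pi$. Applying induction, which preserves subrepresentations, together with induction in stages, I obtain
\[
\rho\cong\Ind_{G_\pi}^G\pi'\hookrightarrow \Ind_{G_\pi}^G\bigl(\Ind_N^{G_\pi}\pi\bigr)=\Ind_N^G\pi,
\]
which is the assertion.

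The point requiring care --- and the reason the statement is not completely routine --- is that one cannot argue more directly by restricting $\rho$ to $N$ and extracting an irreducible subrepresentation. When the $K$-orbit of $\pi$ in $\widehat N$ is infinite, $\rho|_N$ is a genuine direct integral over that orbit and contains no irreducible subrepresentation at all (already for $G=\RRR^2\rtimes SO(2)$ with $\rho$ attached to an orbit of positive radius). Compactness of $G/N$ is therefore used in an essential way, not to produce a subrepresentation inside the restriction, but to guarantee that Frobenius reciprocity and induction in stages are valid on $G_\pi$; this is precisely what allows one to pass from the Mackey description $\rho\cong\Ind_{G_\pi}^G\pi'$ back to a representation induced from $N$ itself.
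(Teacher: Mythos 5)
Your proof is correct, and its skeleton is the same as the paper's: both invoke Theorem~\ref{Mackey}(ii) to write $\rho\simeq\Ind_{G_\pi}^G\pi'$ with $\pi'|_N$ a multiple of $\pi$, both reduce the problem to embedding $\pi'$ into $\Ind_N^{G_\pi}\pi$, and both conclude by induction in stages. The difference lies in how that middle embedding is justified. The paper proves it by hand: it uses the tensor identity $\Ind_N^{G_\pi}(\pi'|_N)\simeq \pi'\otimes\Ind_N^{G_\pi}1_N=\pi'\otimes\lambda_{G_\pi/N}$ (citing \cite[Proposition E.2.5]{BHV}), the fact that $\lambda_{G_\pi/N}$ contains $1_{G_\pi}$ because $G_\pi/N$ is compact, and then the multiple-plus-irreducibility argument to pass from $\pi'\subseteq n\,\Ind_N^{G_\pi}\pi$ to $\pi'\subseteq\Ind_N^{G_\pi}\pi$. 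You instead invoke Frobenius reciprocity for a cocompact normal subgroup as a black box. In the direction you use (discrete containment in the restriction implies discrete containment in the induced representation, when the quotient carries a finite invariant measure) this is indeed a valid classical theorem, due to C.~C.~Moore (Pacific J. Math., 1962); it also admits a short direct proof here, namely sending a nonzero $W\in\operatorname{Hom}_N(\pi'|_N,\pi)$ to the intertwiner $\xi\mapsto\bigl(g\mapsto W\pi'(g)^{-1}\xi\bigr)$, whose image consists of bounded continuous covariant functions and hence lies in the induced space since $G_\pi/N$ is compact. So your appeal is legitimate, but since the reference is not in the paper's bibliography you should either cite Moore's theorem precisely or include this one-line construction; note also that only this direction of Frobenius reciprocity survives in this generality --- the converse fails, as your own motion-group example already shows. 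That closing observation, explaining why one cannot simply extract an irreducible subrepresentation of $\rho|_N$, is correct and is exactly why both arguments must route through the stabilizer $G_\pi$, where the restriction is isotypic.
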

\begin{proof} 
Let $\rho\in \widehat{G}$. By Proposition~\ref{Prop-FiniteIndex}, $G$ is of type I.
 Hence, by Theorem \ref{Mackey}, there exist $\pi\in \widehat{N}$
and  $\sigma\in \widehat{G}_\pi$ such that $\sigma|_N$ is  a multiple  of $\pi$ 
and such that $\rho \simeq \Ind_{G_\pi}^G\sigma$. 

As is well-known (see for instance \cite[Proposition E.2.5]{BHV}),
 $$\Ind_N^{G_\pi}(\sigma|_N) \simeq \sigma \otimes \Ind_N^{G_\pi}1_N,$$ 
that is, 
$$ \Ind_N^{G_\pi}(\sigma|_N)\simeq  \sigma \otimes  \lambda_{G_\pi/N},$$ where 
$\lambda_{G_\pi/N}$ is the regular representation of $G_\pi/N$ lifted
to $G_\pi.$ Since $G_\pi/N$ is compact, $\lambda_{G_\pi/N}$ contains
the trivial representation $1_{G_\pi}.$  It follows that  $\Ind_N^{G_\pi}(\sigma|_N)$
contains $\sigma$.
Since  $\sigma|_N$ is  multiple  of $\pi$ and since $\sigma$ is irreducible,
we see that $\sigma$ is contained $\Ind_N^{G_\pi}\pi$.
Induction by stages shows then that $\rho\simeq \Ind_{G_\pi}^G\sigma $
is contained in  $ \Ind_{G_\pi}^G(\Ind_N^{G_\pi}\pi)\simeq \Ind_N^G\pi.$

\end{proof}

\subsection{Actions of algebraic groups}
\label{SS:Actions}
Assume now that $\kk$ is a local field of characteristic $0$ and $\GG$ a linear algebraic group defined over $\kk$. 
If $V$ is an algebraic variety defined {over $\kk$,} then the set $V(\kk)$ of $\kk$-rational points in
$V$ has also a locally compact topology, {which} we call the Hausdorff topology. 

The following well-known result (see \cite[3.1.3]{Zimmer}) is a crucial tool
for  the sequel. We indicate briefly the main steps in its proof.
\begin{theorem}
\label{Theo-ActAlg}
 Let
$\GG\times  V\to V$ be a $\kk$-rational action of $\GG$ on an algebraic  variety $V$ defined over $\kk$ .
Then every $\GG(\kk)$-orbit in $V(\kk)$ is open in its closure for the Hausdorff topology.
\end{theorem}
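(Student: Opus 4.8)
The plan is to fix an arbitrary point $v\in V(\kk)$ and analyze the orbit map through $v$, carefully comparing the Zariski and Hausdorff topologies on $\kk$-points. Working first over the algebraic closure $\KK$ of $\kk$, let $\OO=\GG\cdot v$ be the corresponding $\GG$-orbit. By the standard fact on orbits of algebraic group actions, $\OO$ is locally closed in $V$ for the Zariski topology, and since $v\in V(\kk)$ and the action is $\kk$-rational, $\OO$ is defined over $\kk$; write $\overline{\OO}$ for its Zariski closure, again $\kk$-defined. The orbit map $\alpha\colon\GG\to \OO$, $g\mapsto g\cdot v$, is a surjective $\kk$-morphism, and the $\GG(\kk)$-orbit of $v$ is precisely the image $\alpha(\GG(\kk))\subseteq \OO(\kk)$. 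I would reduce the statement to showing that this image is Hausdorff-open in $\OO(\kk)$. Indeed, $\OO(\kk)$ is Zariski-open, hence Hausdorff-open, in $\overline{\OO}(\kk)$, while $\overline{\OO}(\kk)$ is Hausdorff-closed in $V(\kk)$ because Zariski-closed subsets cut out Hausdorff-closed sets of $\kk$-points; combining these, an open image is Hausdorff-open in $\overline{\OO}(\kk)$, and therefore open in its Hausdorff closure, which is contained in $\overline{\OO}(\kk)$.

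The analytic heart of the proof is to establish that $\alpha$ induces an open map on $\kk$-points, and this is where I would use the hypothesis $\operatorname{char}\kk=0$. In characteristic zero the stabilizer $\GG_v$ is smooth, and a dimension count gives $\dim \GG_v=\dim\GG-\dim\OO$. Consequently the differential $d\alpha_e\colon\mathfrak{g}\to T_v\OO$, whose kernel is $\operatorname{Lie}(\GG_v)$, is surjective, so $\alpha$ is submersive at $e$; by $\GG$-equivariance of $\alpha$ it is submersive at every point. I would then invoke the implicit function theorem over the local field $\kk$: a submersion of smooth $\kk$-varieties induces an open map between the associated $\kk$-analytic manifolds of $\kk$-points. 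Applied to $\alpha$, this yields that $\alpha\colon\GG(\kk)\to\OO(\kk)$ is open, so the $\GG(\kk)$-orbit $\alpha(\GG(\kk))$ is Hausdorff-open in $\OO(\kk)$, as required.

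Assembling these ingredients finishes the argument: the $\GG(\kk)$-orbit is Hausdorff-open in $\OO(\kk)$, which is Hausdorff-open in $\overline{\OO}(\kk)$; hence the orbit is Hausdorff-open in $\overline{\OO}(\kk)$, a Hausdorff-closed subset of $V(\kk)$ containing it, and so the orbit is open in its own Hausdorff closure. I expect the main obstacle to be the local-field version of the implicit function (submersion) theorem together with the verification that $\alpha$ is genuinely submersive; the latter is exactly where characteristic zero is essential, guaranteeing separability of the orbit map (equivalently, smoothness of the stabilizer) so that surjectivity of the differential, not merely dominance, holds. I would also take care that passing to $\kk$-points is compatible with the relevant topologies, and note that the $\GG(\kk)$-orbit may be strictly smaller than $\OO(\kk)$ owing to Galois-cohomological obstructions; it is precisely the openness of $\alpha$ on $\kk$-points that shows the orbit is nonetheless open in $\OO(\kk)$.
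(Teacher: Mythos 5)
Your proposal is correct, but it takes a genuinely different route from the paper's proof. The paper's argument is two citations long and arithmetic in nature: it quotes \cite[1.8]{Borel} to see that the full $\GG$-orbit $W$ is Zariski-open in its Zariski closure, notes that consequently $W(\kk)$ is Hausdorff-open in its Hausdorff closure, and then invokes the Borel--Serre finiteness theorem \cite[6.4]{Borel-Serre} --- a Galois-cohomological result valid in characteristic zero --- to conclude that $W(\kk)$ is a \emph{finite} union of $\GG(\kk)$-orbits; the claim then ``follows,'' the unstated final step being the standard Baire-category argument that when a locally compact second countable space is a finite union of orbits of a locally compact second countable group, each orbit is open in its closure. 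You replace the Galois-cohomology input by analytic input: characteristic zero gives smoothness of the stabilizer, hence separability and submersivity of the orbit map $\alpha$, and the implicit function theorem over the local field $\kk$ then makes $\alpha$ open on $\kk$-points, so the $\GG(\kk)$-orbit is open in $\OO(\kk)$ directly, with no category argument and no finiteness theorem needed. Each approach buys something. The paper's is shorter modulo its references and avoids all differential calculus; yours is more self-contained (no Galois cohomology), makes the final topological step explicit rather than implicit, and yields finer information --- every $\GG(\kk)$-orbit is open, hence clopen, in $\OO(\kk)$, and the orbit map from $\GG(\kk)$ onto the orbit is open --- though it does not recover the Borel--Serre finiteness of the set of $\GG(\kk)$-orbits inside $\OO(\kk)$, which the paper gets for free. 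Note that both proofs use the characteristic-zero hypothesis essentially, but in different places: you use it for separability of the orbit map, the paper for the Borel--Serre theorem.
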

\begin{proof}
Let $W$ be a $\GG$-orbit in $V.$ Then 
$W$ is open in its closure for the Zariski topology (see \cite[1.8]{Borel}).
This implies that $W(\kk)$ is open in its closure for the  Hausdorff topology.
By \cite[6.4]{Borel-Serre}, there are only finitely many $\GG(\kk)$-orbits
contained in $W(\kk)$ and the claim follows.
\end{proof}

We will use the previous theorem in the case of a linear representation of $\GG.$
More precisely, let $\KK$ be an algebraic closure of $\kk$
and let $V$ be   a finite dimensional vector space over $\KK$,
equipped with  a $\kk$-structure $V_{\kk}$. Let 
$\rho:\GG\to GL(V)$ be a $\kk$-rational representation of $\GG$.
Consider the dual  adjoint  $\rho^*$ of $\GG$ on the dual vector space 
$V^*={\rm Hom}(V, \KK)$, equipped with  the $\kk$-structure $V_{\kk}^*= {\rm Hom}(V_{\kk}, \kk)$. Then $\rho^*:\GG\to GL(V^*)$  is a $\kk$-rational representation of $\GG$.
Fix a non-trivial unitary character $\eps\in \widehat{\kk}$ of the additive group of $\kk.$
The map $\Phi:V_{\kk}^*\to \widehat{V_{\kk}},$ given by 
$$
\Phi(f)(v)=\eps\left({f(v)}\right) \tout f\in V^* \quad\text{and} \quad v\in V_\kk,
$$ 
is an isomorphism of topological groups (see \cite[Chap.II, \S5]{Weil}).
The group $\GG(\kk)$ acts on  $V_{\kk}, V_{\kk}^*$ and  $\widehat{V_{\kk}}$;
the map $\Phi$ is $\GG(\kk)$-equivariant.
Therefore, the following corollary is a direct consequence of Theorem~\ref{Theo-ActAlg}.
\begin{corollary}
\label{Cor-DualRep}
Let $\rho:\GG\to GL(V)$ be a $\kk$-rational representation of $\GG$ as above.
Every $\GG(\kk)$-orbit in $\widehat{V_{\kk}}$ is open in its closure.
\end{corollary}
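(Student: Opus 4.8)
The plan is to deduce the statement directly from Theorem~\ref{Theo-ActAlg}, using the $\GG(\kk)$-equivariant isomorphism $\Phi\colon V_\kk^*\to\widehat{V_\kk}$ constructed above to transport the orbit structure from the vector space $V_\kk^*$ to its Pontryagin dual $\widehat{V_\kk}$. Since $\Phi$ is a homeomorphism, the topological property ``open in its closure'' will pass from orbits in $V_\kk^*$ to orbits in $\widehat{V_\kk}$ with no extra work.

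First I would record that the contragredient representation $\rho^*\colon\GG\to GL(V^*)$ furnishes a $\kk$-rational linear action of $\GG$ on the affine variety $V^*$, so that Theorem~\ref{Theo-ActAlg} applies to it: every $\GG(\kk)$-orbit $\OO\subseteq V_\kk^*$ is open in its closure $\overline{\OO}$ for the Hausdorff topology. Then I would transport this conclusion along $\Phi$. Because $\Phi$ is a $\GG(\kk)$-equivariant bijection, it sends $\OO$ onto a $\GG(\kk)$-orbit $\Phi(\OO)\subseteq\widehat{V_\kk}$; because $\Phi$ is a homeomorphism, it commutes with closures and preserves openness, so $\Phi(\OO)$ is open in $\Phi(\overline{\OO})=\overline{\Phi(\OO)}$. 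As $\Phi$ is surjective, every orbit in $\widehat{V_\kk}$ has this form, and the corollary follows.

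The only point requiring care --- and it is bookkeeping rather than a genuine obstacle --- is to confirm that $\Phi$ intertwines the action of $\GG(\kk)$ on $V_\kk^*$ given by $\rho^*$ with the action it induces on $\widehat{V_\kk}$ from the action on $V_\kk$. This is precisely the equivariance asserted just before the statement: writing $g\cdot f=\rho^*(g)f$ for $f\in V_\kk^*$ and letting $\GG(\kk)$ act on characters by $(g\cdot\chi)(v)=\chi(g^{-1}v)$, one computes $\Phi(g\cdot f)(v)=\eps\big(f(g^{-1}v)\big)=\Phi(f)(g^{-1}v)=(g\cdot\Phi(f))(v)$ for all $v\in V_\kk$, whence $\Phi(g\cdot f)=g\cdot\Phi(f)$. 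With this identity in hand, there is nothing further to prove.
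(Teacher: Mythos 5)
Your proposal is correct and follows exactly the route the paper takes: apply Theorem~\ref{Theo-ActAlg} to the $\kk$-rational dual representation $\rho^*$ on $V^*$ and transport the conclusion through the $\GG(\kk)$-equivariant homeomorphism $\Phi\colon V_\kk^*\to\widehat{V_\kk}$. The explicit verification of equivariance is a welcome (if routine) addition to what the paper leaves implicit.
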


Let $\UU$ be the unipotent radical of $\GG$ and $ \mathfrak{u}$ its Lie algebra.
Then $\UU$ is an algebraic subgroup of $\GG$ defined over $\kk$  and  the exponential map
$\exp: \mathfrak{u}\to \UU$ is an isomorphism of $\kk$-varieties;
moreover, $\mathfrak{m}\to \exp \mathfrak{m}$ is a bijection between
Lie subalgebras of $\mathfrak{u}$ and (connected) algebraic subgroups of $\UU.$

The Lie algebra    $\mathfrak{u}$  has  a $\kk$-structure ${\mathfrak{u}}_{\kk}$
which is  the Lie $\kk$-subalgebra of $\mathfrak{u}$ 
for which $\exp: {\mathfrak{u}}_{\kk}\to \UU(\kk)$ is a bijection.

The action of  $\GG$ by conjugation {on $\UU$  induces} an action, called
the \textbf{adjoint representation} $\Ad:\GG\to GL(\mathfrak{u})$,
which is  a $\kk$-rational representation by automorphisms  of the Lie algebra $\mathfrak{u}$ for which
$\exp: \mathfrak{u}\to \UU$ is $\GG$-equivariant.
The map $\mathfrak{m}\to \exp \mathfrak{m}$ is a bijection between
$\Ad(\GG)$-invariant ideals of  $\mathfrak{u}$ and algebraic normal subgroup of $\GG$ contained in $\UU.$

\begin{lemma}
\label{Lem-RegEmbedded}
Let $\MM$ be an abelian algebraic normal subgroup of $\GG$ contained in $\UU.$
Then $\MM(\kk)$ is regularly embedded in $\GG(\kk).$
\end{lemma}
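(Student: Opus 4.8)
The plan is to reduce the statement to Corollary~\ref{Cor-DualRep} by identifying $\widehat{\MM(\kk)}$ with the $\kk$-points of the dual of a restriction of the adjoint representation, and then to invoke the standard fact from the theory of transformation groups that the orbit space of such an action is countably separated precisely when every orbit is locally closed.

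First I would transport everything to the Lie algebra. Since $\MM$ is an abelian algebraic normal subgroup contained in $\UU$, it corresponds under $\exp$ to an abelian $\Ad(\GG)$-invariant ideal $\mathfrak{m}$ of $\mathfrak{u}$, with $\MM=\exp\mathfrak{m}$ and $\MM(\kk)=\exp(\mathfrak{m}_\kk)$. Because $\mathfrak{m}$ is abelian, $\exp:\mathfrak{m}_\kk\to\MM(\kk)$ is an isomorphism of topological groups; in particular $\MM(\kk)$ is topologically the additive group of the finite-dimensional $\kk$-vector space $\mathfrak{m}_\kk$, so its unitary dual $\widehat{\MM(\kk)}$ (whose Mackey Borel structure coincides with the Borel structure of the Pontryagin dual, $\MM(\kk)$ being abelian) is identified with $\widehat{\mathfrak{m}_\kk}$.

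Next I would track the $\GG(\kk)$-action. Under $\exp$, the conjugation action of $\GG(\kk)$ on $\MM(\kk)$ becomes the adjoint action on $\mathfrak{m}_\kk$, which is exactly the $\kk$-rational representation $\rho=\Ad|_{\mathfrak{m}}$ of $\GG$; the induced action on $\widehat{\MM(\kk)}\cong\widehat{\mathfrak{m}_\kk}$ is the contragredient action, and through the $\GG(\kk)$-equivariant isomorphism $\Phi$ of Corollary~\ref{Cor-DualRep} it is intertwined with the $\rho^*$-action on $(\mathfrak{m}_\kk)^*$. Corollary~\ref{Cor-DualRep} then applies verbatim and shows that every $\GG(\kk)$-orbit in $\widehat{\MM(\kk)}$ is open in its closure for the Hausdorff topology, that is, is locally closed.

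Finally I would feed local closedness into a transformation-group theorem. The space $\widehat{\MM(\kk)}$ is second countable and locally compact, the group $\GG(\kk)$ is second countable and locally compact and acts continuously, and every orbit is locally closed; by the theorem of Effros (equivalently, the relevant part of Glimm's theorem; see, e.g., \cite{Zimmer}), this forces the quotient Borel space $\widehat{\MM(\kk)}/\GG(\kk)$ to be countably separated, which is exactly the assertion that $\MM(\kk)$ is regularly embedded in $\GG(\kk)$. The main obstacle is this last step: one must check that the hypotheses of the Effros/Glimm theorem are met (continuity of the action and the coincidence of the Mackey Borel structure on $\widehat{\MM(\kk)}$ with the standard Borel structure of the Pontryagin dual) and that its conclusion matches the definition of regular embedding used here; the earlier steps are essentially a bookkeeping transport of structure along $\exp$ and $\Phi$.
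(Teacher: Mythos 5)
Your proposal is correct and follows essentially the same route as the paper: the paper likewise reduces the claim, via the elementary Effros--Glimm-type argument cited as \cite[2.1.12]{Zimmer}, to showing that every $\GG(\kk)$-orbit in $\widehat{\MM(\kk)}$ is locally closed, transports the problem through the topological group isomorphism $\exp:\mathfrak{m}_\kk\to\MM(\kk)$ (valid since $\mathfrak{m}$ is abelian), and concludes by Corollary~\ref{Cor-DualRep}. The only difference is that you spell out the equivariance bookkeeping and the identification of Borel structures in more detail than the paper does.
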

\begin{proof}
By an elementary argument (see \cite[2.1.12]{Zimmer}), it suffices to show that every $\GG(\kk)$-orbit in $\widehat{\MM(\kk)}$ is open in its closure. 
Let  $\mathfrak{m}$ be the $\Ad(\GG)$-invariant  ideal of  $\mathfrak{u}$ corresponding
to $\MM$ and set $\mathfrak{m}_{\kk}= \mathfrak{m}\cap \mathfrak{u}_\kk.$ 
Since $\MM$ is abelian, $\exp: \mathfrak{m}_{\kk}\to \MM(\kk)$ is an isomorphism of 
topological groups. So, it suffices to show that every $\GG(\kk)$-orbit in $\widehat{\mathfrak{m}_{\kk}}$ is open in its closure.
This is indeed the case by Corollary~\ref{Cor-DualRep}.

\end{proof}

\subsection{Heisenberg groups and Weil representation}
\label{SS:Heisenberg}
Let $\kk$ be a   field of characteristic $0$ and let $n\geq 1$ be an integer. The Heisenberg group $H_{2n+1}(\kk)$
 is the  nilpotent group with underlying set $\kk^{2n}\times \kk$ and product
$$((x,y),s)((x',y'),t)=\left((x+x',y+y'),s + t + \dfrac{1}{2}\beta((x,y),(x',y'))\right),
$$
for $(x,y), (x',y')\in \kk^{2n},\, s,t\in \kk,$
where $\beta$ is the standard symplectic form on $\kk^{2n}$.
The group $H_{2n+1}(\kk)$ is the group of $\kk$-rational points of a unipotent algebraic
 group  $H_{2n+1}$ defined over $\kk.$
 Its Lie algebra $\mathfrak{h}_{2n+1}$ has a basis 
 $\{X_1, \dots, X_n, Y_1, \dots, Y_n, Z\}$ with non trivial commutators
 $[X_i, Y_i]=Z$ for all $i=1,\dots, n.$
The center of $\mathfrak{h}_{2n+1}$ is equal to the commutator subalgebra 
$[\mathfrak{h}_{2n+1}, \mathfrak{h}_{2n+1}]$ and is  spanned by $Z$.
 This last property characterizes $H_{2n+1}$; for the proof, see \cite[Lemme 4]{Dixmier}.
 \begin{lemma}
 \label{Lem-Heisenberg}
 Let $\UU$ be a unipotent algebraic group defined over $\kk$ with Lie algebra $\mathfrak{u}$.
 Assume that the center $\mathfrak{z}$ of $\mathfrak u$ equals $[\mathfrak{u},\mathfrak{u}]$ and that 
 $\dim \mathfrak{z}=\dim ([\mathfrak{u},\mathfrak{u}])=1$.
 Then $\UU$ is isomorphic to $H_{2n+1}$ over $\kk$ for some $n\geq 1.$
  \end{lemma}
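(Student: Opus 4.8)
The plan is to reduce the statement to a purely Lie-algebraic one and then transport it back to the group via the exponential map. Since $\kk$ has characteristic $0$ and $\UU$ is unipotent, $\exp\colon \mathfrak{u}\to\UU$ is an isomorphism of $\kk$-varieties carrying $\mathfrak{u}_\kk$ onto $\UU(\kk)$, and the group law on $\UU$ is recovered from the bracket through the Baker--Campbell--Hausdorff formula. Here the hypothesis $[\mathfrak{u},\mathfrak{u}]=\mathfrak{z}$ forces $\mathfrak{u}$ to be two-step nilpotent, since $[[\mathfrak{u},\mathfrak{u}],\mathfrak{u}]=[\mathfrak{z},\mathfrak{u}]=0$; consequently the formula terminates after the quadratic term and everything in sight is $\kk$-rational. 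It follows that any isomorphism of $\kk$-Lie algebras $\mathfrak{u}_\kk\cong(\mathfrak{h}_{2n+1})_\kk$ extends scalars to a $\kk$-rational isomorphism $\mathfrak{u}\cong\mathfrak{h}_{2n+1}$ and hence, through $\exp$, to an isomorphism of algebraic groups $\UU\cong H_{2n+1}$ defined over $\kk$. It therefore suffices to exhibit a Heisenberg basis of $\mathfrak{u}_\kk$.

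To do this, I would first observe that the commutator ideal $[\mathfrak{u},\mathfrak{u}]$ and the center $\mathfrak{z}$ are defined over $\kk$, so that $\mathfrak{z}_\kk:=\mathfrak{z}\cap\mathfrak{u}_\kk$ is one-dimensional over $\kk$; fix a generator $Z$. Since $[\mathfrak{u}_\kk,\mathfrak{u}_\kk]\subseteq\mathfrak{z}_\kk=\kk Z$, for all $X,Y\in\mathfrak{u}_\kk$ one may write $[X,Y]=\beta(X,Y)Z$ with $\beta(X,Y)\in\kk$, and because $\mathfrak{z}$ is central this $\kk$-bilinear alternating form factors through the quotient $V_\kk:=\mathfrak{u}_\kk/\mathfrak{z}_\kk$. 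The form $\beta$ is nondegenerate on $V_\kk$: if $[X,Y]=0$ for all $Y\in\mathfrak{u}_\kk$ then $X\in\mathfrak{z}$, i.e.\ the class of $X$ vanishes in $V_\kk$. Hence $\dim_\kk V_\kk$ is even, say $2n$, and $V_\kk$ admits a symplectic basis represented by elements $X_1,\dots,X_n,Y_1,\dots,Y_n\in\mathfrak{u}_\kk$ with $\beta(X_i,Y_j)=\delta_{ij}$ and $\beta(X_i,X_j)=\beta(Y_i,Y_j)=0$. Moreover $n\geq 1$, since $n=0$ would force $\mathfrak{u}=\mathfrak{z}$ to be abelian, contradicting $[\mathfrak{u},\mathfrak{u}]=\mathfrak{z}\neq 0$.

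With these choices the relations $[X_i,Y_j]=\delta_{ij}Z$, $[X_i,X_j]=[Y_i,Y_j]=0$, together with the centrality of $Z$, hold by construction, so $\{X_1,\dots,X_n,Y_1,\dots,Y_n,Z\}$ is a $\kk$-basis of $\mathfrak{u}_\kk$ satisfying exactly the defining commutation relations of $\mathfrak{h}_{2n+1}$. This yields the desired $\kk$-Lie algebra isomorphism $\mathfrak{u}_\kk\cong(\mathfrak{h}_{2n+1})_\kk$, and the first paragraph then gives $\UU\cong H_{2n+1}$ over $\kk$. The genuinely delicate points are bookkeeping the $\kk$-rationality throughout --- that $Z$, the form $\beta$, and the symplectic basis can all be chosen over $\kk$, which is guaranteed because the bracket, the center, and the commutator ideal are $\kk$-defined and a nondegenerate alternating form admits a symplectic basis over any field --- and the transfer from the Lie-algebra isomorphism to the algebraic-group isomorphism via $\exp$ and the truncated Baker--Campbell--Hausdorff formula; the latter is the most technical step, but it is standard for unipotent groups in characteristic $0$.
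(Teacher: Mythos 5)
Your argument is correct and is essentially the classical one: the paper itself gives no written proof but refers to Dixmier's Lemme 4, which proceeds exactly as you do, by showing the commutator pairing induces a nondegenerate alternating form on $\mathfrak{u}_\kk/\mathfrak{z}_\kk$, choosing a symplectic basis over $\kk$, and transporting the resulting Lie-algebra isomorphism to the groups via $\exp$ and the (here quadratic) Baker--Campbell--Hausdorff formula. No gaps.
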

 
The symplectic group $Sp_{2n},$ which is the isometry group of $\beta,$ 
acts by rational automorphisms of $H_{2n+1},$ given by 
$$
 \vfi_g((x,y),t)= (g(x,y),t) \tout g\in Sp_{2n},\ ((x,y),t) \in H_{2n+1}.
$$
Let $\Aut_c(H_{2n+1})$ be the group of automorphisms of $H_{2n+1}$
which acts trivially on the center of $H_{2n+1}$. Observe that $\vfi_g\in\Aut_c(H_{2n+1})$
for every $g\in Sp_{2n}$ and that $I_h\in\Aut_c(H_{2n+1})$ for every $h\in H_{2n+1},$ 
where $I_h$ is the inner automorphism given by $h.$
The following proposition is proved in \cite[1.22]{Folland} in the case $\kk=\RRR;$
however, its proof is valid in our setting.
\begin{proposition}
\label{Pro-AutHeisenberg}
Every automorphism in $\Aut_c(H_{2n+1})$ 
can be uniquely written as the product $\vfi_g\circ I_h$ 
for $g\in Sp_{2n}$ and $h\in H_{2n+1}.$
So, $\Aut_c(H_{2n+1})$  can be identified with
the semi-direct product $Sp_{2n}\ltimes H_{2n+1}.$ 
\end{proposition}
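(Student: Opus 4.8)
The plan is to exhibit $\Aut_c(H_{2n+1})$ as a split extension of $Sp_{2n}$ by the group of inner automorphisms. The key structural fact is that the center $Z(H_{2n+1})$ — equivalently the line $\mathfrak{z}=\kk Z$ — is characteristic, so every $\alpha\in\Aut_c(H_{2n+1})$ descends to an automorphism $\bar\alpha$ of $H_{2n+1}/Z(H_{2n+1})\cong\kk^{2n}$. Since $\alpha$ fixes $Z(H_{2n+1})$ pointwise and the symplectic form $\beta$ is recovered from the commutator map $(h,h')\mapsto hh'h^{-1}h'^{-1}$ taking values in $Z(H_{2n+1})$, the induced $\bar\alpha$ preserves $\beta$; thus I obtain a homomorphism $q\colon\Aut_c(H_{2n+1})\to Sp_{2n}$, $\alpha\mapsto\bar\alpha$. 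It is exactly the hypothesis ``trivial on the center'' that rules out a similitude factor and lands $\bar\alpha$ in $Sp_{2n}$ rather than in $GSp_{2n}$.

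First I would check that $g\mapsto\vfi_g$ is a group homomorphism $Sp_{2n}\to\Aut_c(H_{2n+1})$ which is a section of $q$: indeed $\vfi_g$ fixes the center and induces $g$ on $\kk^{2n}$, so $q(\vfi_g)=g$. Hence $q$ is surjective and the extension splits.

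The heart of the argument is the identification of $\ker q$ with the inner automorphisms. A direct computation gives, for $h=((a,b),c)$, that $I_h((x,y),s)=((x,y),\,s+\beta((a,b),(x,y)))$; in particular $I_h$ lies in $\ker q$ and depends only on $(a,b)$, i.e. only on the class of $h$ modulo $Z(H_{2n+1})$. Conversely, if $\alpha\in\ker q$, then $\alpha$ fixes the center pointwise and induces the identity on $\kk^{2n}$, so $\alpha((x,y),s)=((x,y),\,s+\lambda(x,y))$ for some $\lambda\colon\kk^{2n}\to\kk$; the homomorphism property forces $\lambda$ to be additive, and — this is the step that uses the rational (resp. continuous, when $\kk=\RRR$) nature of $\alpha$ — $\lambda$ is then $\kk$-linear. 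By nondegeneracy of $\beta$ there is a unique $(a,b)\in\kk^{2n}$ with $\lambda=\beta((a,b),\,\cdot\,)$, whence $\alpha=I_h$. Therefore $\ker q=\mathrm{Inn}(H_{2n+1})\cong H_{2n+1}/Z(H_{2n+1})$.

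Combining the splitting with the computation of the kernel, every $\alpha$ factors as $\alpha=\vfi_{q(\alpha)}\circ I_h$, with $q(\alpha)\in Sp_{2n}$ uniquely determined and $h$ determined modulo $Z(H_{2n+1})$; the relation $\vfi_g\circ I_h\circ\vfi_g^{-1}=I_{\vfi_g(h)}$ shows that $Sp_{2n}$ acts on the inner automorphisms through its defining action on $\kk^{2n}$, which yields the semidirect product decomposition $\Aut_c(H_{2n+1})\cong Sp_{2n}\ltimes H_{2n+1}$ (the $H_{2n+1}$-factor entering through $\mathrm{Inn}(H_{2n+1})=H_{2n+1}/Z(H_{2n+1})$). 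The main obstacle is precisely the linearity step of the previous paragraph: an a priori arbitrary automorphism only forces $\lambda$ to be additive, so one must invoke regularity (continuity for $\kk=\RRR$) to upgrade additivity to $\kk$-linearity before the nondegeneracy of $\beta$ can be applied; the remaining verifications — that $\bar\alpha\in Sp_{2n}$, that $g\mapsto\vfi_g$ splits $q$, and the conjugation formula — are routine.
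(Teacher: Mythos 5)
Your argument is correct and essentially complete; note that the paper itself gives no proof of this proposition, deferring instead to Folland's Proposition 1.22 for $\kk=\RRR$ with the remark that the proof carries over, so you are in effect supplying the omitted argument rather than diverging from one. Your split-extension scheme --- the homomorphism $q:\Aut_c(H_{2n+1})\to Sp_{2n}$ induced on $H_{2n+1}/Z$ (landing in $Sp_{2n}$ because $\alpha$ fixes the center and the commutator recovers $\beta$), the section $g\mapsto\vfi_g$, and the identification of $\ker q$ with the inner automorphisms via $I_{((a,b),c)}((x,y),s)=((x,y),\,s+\beta((a,b),(x,y)))$ together with the nondegeneracy of $\beta$ --- is exactly the standard route behind the cited result. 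Two small remarks. First, since $\Aut_c(H_{2n+1})$ here consists of \emph{algebraic} automorphisms, the upgrade of $\lambda$ (and of $\bar\alpha$) from additive to $\kk$-linear should be justified by rationality in characteristic zero (an additive polynomial map is linear when $\mathrm{char}\,\kk=0$) rather than by continuity, which for $\kk=\CCC$ would not suffice. Second, you rightly observe that $h$ is determined only modulo the center, since $I_h=I_{hz}$ for central $z$; the faithful identification is therefore with $Sp_{2n}\ltimes(H_{2n+1}/Z)$, and the ``uniqueness'' in the statement must be read modulo $Z$ --- on this point your proof is more precise than the statement itself, and this imprecision is harmless for the paper's later use of the proposition.
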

The following facts about the irreducible representations 
of $H_{2n+1}(\kk)$  and associated metaplectic (or oscillator)
representations of $Sp_{2n}(\kk)$  will play a crucial role in the sequel;
for more details, see \cite{Weil}.
\begin{theorem}
\label{Theo-StoneVN-Weil}
Denote by $Z$ the center of $H_{2n+1}(\kk)$
and let   $\chi \in\widehat{Z}$ be a non trivial character of $Z.$
\begin{itemize}
\item[(i)] \textbf{(Stone-von Neumann)} 
 There exists, up to equivalence, a \emph{unique}
irreducible unitary representation $\pi_\chi$  of $H_{2n+1}(\kk)$
such that $\pi_\chi|_{Z}$ is a multiple of $\chi.$
\item[(ii)] \textbf{(Segal-Shale-Weil)} 
 The representation $\pi_\chi$ extends to a unitary 
 representation of 
 $\widetilde{Sp_{2n}(\kk)}\ltimes H_{2n+1}(\kk),$
 where $\widetilde{Sp_{2n}(\kk)}$ is  a twofold  cover of $Sp_{2n}(\kk),$
 called the \emph{metaplectic} group.
\end{itemize}

\end{theorem}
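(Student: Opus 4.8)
The plan is to treat the two assertions separately, deducing (ii) from (i) together with Schur's lemma. Throughout I identify the center $Z$ with $\kk$ via $t\mapsto ((0,0),t)$, so that a non-trivial $\chi\in\widehat Z$ has the form $\chi(t)=\eps(at)$ for some $a\in\kk^*$. For the existence part of (i), I would use the induced (Schrödinger) model. Decompose $\kk^{2n}=\kk^n\oplus\kk^n$ and let $L=0\oplus\kk^n$ be a Lagrangian subspace for $\beta$. Then $A=\{((0,y),t):y\in\kk^n,\,t\in\kk\}$ is an abelian subgroup (because $\beta$ vanishes on $L$) containing $Z$, and it is normal since it contains $[H_{2n+1}(\kk),H_{2n+1}(\kk)]=Z$. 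Extend $\chi$ to a character $\widetilde\chi$ of $A$ by $\widetilde\chi((0,y),t)=\chi(t)$, and set $\pi_\chi=\Ind_A^{H_{2n+1}(\kk)}\widetilde\chi$, realized on $L^2(\kk^n)$ through the coordinates on $H_{2n+1}(\kk)/A\cong\kk^n$. A direct computation gives the usual formula in which the $x$-variables act by translation, the $y$-variables by modulation through $\eps$, and $Z$ by the scalar $\chi$; in particular $\pi_\chi|_Z$ is a multiple of $\chi$. For irreducibility I would apply Mackey's criterion: if $h=((x,0),0)$ then $\widetilde\chi(h^{-1}ah)=\widetilde\chi(a)\chi(-\langle x,y\rangle)$ on $a=((0,y),t)$, so the stabilizer of $\widetilde\chi$ in $H_{2n+1}(\kk)$ for the conjugation action on $\widehat A$ reduces to $A$ exactly because $\chi$ is non-trivial, whence $\pi_\chi$ is irreducible.

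For the uniqueness part of (i) --- the Stone--von Neumann statement proper --- I would argue at the level of the twisted convolution algebra. Any irreducible $\pi$ with $\pi|_Z$ a multiple of $\chi$ factors through the quotient of $C_c(H_{2n+1}(\kk))$ in which $Z$ acts by $\chi$, i.e. through the $\chi$-twisted convolution algebra on $\kk^{2n}$. The key computation is that $f\mapsto\pi_\chi(f)$ identifies this twisted algebra with a dense $*$-subalgebra of the Hilbert--Schmidt operators on $L^2(\kk^n)$, whose norm closure is the full algebra $\K(L^2(\kk^n))$ of compact operators. Since $\K(L^2(\kk^n))$ admits, up to equivalence, a unique irreducible representation, every irreducible $\pi$ with central character $\chi$ is equivalent to $\pi_\chi$, which proves (i).

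For (ii), fix $g\in Sp_{2n}(\kk)$. Since $\vfi_g$ fixes $Z$ pointwise, the representation $\pi_\chi\circ\vfi_g$ is again irreducible with central character $\chi$, hence by (i) equivalent to $\pi_\chi$; by Schur's lemma I may choose a unitary $W(g)$ on $L^2(\kk^n)$, unique up to a scalar in $\TT$, with $\pi_\chi(\vfi_g(h))=W(g)\pi_\chi(h)W(g)^{-1}$ for all $h$. Uniqueness up to scalars forces $g\mapsto W(g)$ to be an $\omega$-representation of $Sp_{2n}(\kk)$ for some $\omega\in Z^2(Sp_{2n}(\kk),\TT)$, and the pair $(W,\pi_\chi)$ then assembles into an $\omega$-representation of the semidirect product $Sp_{2n}(\kk)\ltimes H_{2n+1}(\kk)$ on $L^2(\kk^n)$. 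It then remains to show that $W$ lifts to an ordinary representation of a twofold cover, equivalently that the class of $\omega$ in $H^2(Sp_{2n}(\kk),\TT)$ has order dividing $2$.

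The main obstacle is precisely this last point: the determination of the metaplectic cocycle is not formal. It requires an explicit analysis of the intertwiners $W(g)$, which are built from Fourier transforms and kernels involving quadratic characters and Gauss-type integrals over $\kk$, together with a computation of the resulting Maslov-type cocycle showing that its values can be normalized into $\{\pm1\}$. Rather than reproduce this computation, I would invoke Weil's construction (\cite{Weil}) to obtain the lift to the twofold metaplectic cover $\widetilde{Sp_{2n}(\kk)}$, thereby promoting the $\omega$-representation of $Sp_{2n}(\kk)\ltimes H_{2n+1}(\kk)$ to a genuine unitary representation of $\widetilde{Sp_{2n}(\kk)}\ltimes H_{2n+1}(\kk)$ and completing the proof of (ii).
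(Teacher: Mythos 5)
Your proposal is correct, but note that the paper does not actually prove Theorem~\ref{Theo-StoneVN-Weil}: it records both statements as classical facts and refers to \cite{Weil} for details, so there is no internal argument to compare against. Your sketch is the standard one and is sound: the Schr\"odinger model $\Ind_A^{H_{2n+1}(\kk)}\widetilde\chi$ with the Mackey irreducibility criterion (your computation of $\widetilde\chi(h^{-1}ah)$ is right, and nondegeneracy of $\beta$ together with $\chi\neq 1$ does force the stabilizer down to $A$), the uniqueness via the $\chi$-twisted convolution algebra being the compacts on $L^2(\kk^n)$, and the Schur-lemma construction of the projective intertwiners $W(g)$. Two small points you should make explicit if you write this up: (a) to get $\omega\in Z^2(Sp_{2n}(\kk),\TT)$ in the sense of Subsection~\ref{SS:ProjectiveRep} you must choose $g\mapsto W(g)$ Borel, which is possible by the standard Mackey selection argument; (b) faithfulness of $\pi_\chi$ on the twisted group $C^*$-algebra (equivalently, that this algebra \emph{is} $\K(L^2(\kk^n))$ rather than merely surjecting onto it) needs a word, e.g.\ via simplicity of the twisted algebra for a nondegenerate pairing or via $C_0(\kk^n)\rtimes\kk^n\cong\K(L^2(\kk^n))$. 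You correctly identify the one genuinely non-formal step -- showing the Mackey obstruction $[\omega]$ has order two, which rests on Weil's explicit analysis of the intertwiners and the associated Gauss sums -- and you defer it to \cite{Weil}, which is exactly what the authors do.
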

\subsection{Large compact subgroups and admissible representations}
Let $G$ be a locally compact group. 
We introduce a few notions from \cite[7.5]{Warner}. 
\begin{definition}
\label{Def-LargeCompact}
Let $K$ be a compact subgroup of $G.$
\begin{itemize}
\item[(i)] $K$ is said to be \textbf{large} in $G$ if, for every $\sigma\in \widehat{K}$,
$$\sup_{\pi \in \widehat{G}} m(\sigma, \pi|_K)<+\infty,$$
where $m(\sigma, \pi|_K)$ is the multiplicity    of $\sigma$
in $\pi|_K.$
\item[(ii)]
$K$ is said to be  \textbf{uniformly large} in $G,$ if there exists an integer 
$M$ such that, for every $\sigma\in \widehat{K}$, 
$$\sup_{\pi \in \widehat{G}} m(\sigma, \pi|_K)\leq M \dim \sigma,$$
\end{itemize}
\end{definition}
\begin{proposition}
\label{Pro-LargeCompactSubgroups}
Let $G$ be a second countable  locally compact group.
\begin{itemize}
\item[(i)] Assume that $G$ contains a large compact subgroup. Then $G$ is CCR.
\item[(ii)] Assume that $G$ contains a uniformly large compact subgroup.
Then $G$ is trace class.
\end{itemize}
\end{proposition}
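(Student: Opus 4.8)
The plan is to realise every operator $\pi(f)$, for $\pi\in\widehat{G}$, as a convergent sum of finite-rank blocks indexed by the $K$-types, and then to read off compactness from largeness and finiteness of the trace from uniform largeness. Throughout, fix $\pi\in\widehat{G}$ on $\H$ and decompose $\pi|_K=\bigoplus_{\sigma\in\widehat{K}}\H_\sigma$ into its $K$-isotypic components; write $P_\sigma=\pi(e_\sigma)=\int_K e_\sigma(k)\pi(k)\,dk$ for the corresponding orthogonal projection, where $e_\sigma=d_\sigma\overline{\chi_\sigma}\in C(K)$, with $d_\sigma=\dim\sigma$ and $\chi_\sigma$ the character of $\sigma$. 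Since $\dim\H_\sigma=d_\sigma\,m(\sigma,\pi|_K)$, the largeness hypothesis of (i) -- and a fortiori the uniform largeness of (ii) -- guarantees that every $\H_\sigma$ is finite-dimensional, so each block $P_\sigma\pi(f)P_\tau=\pi(e_\sigma*f*e_\tau)$ has finite rank, at most $\min(\dim\H_\sigma,\dim\H_\tau)$.

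For (i) I would first show that the $K\times K$-finite functions -- those $g\in C_c(G)$ for which $e_\sigma*g*e_\tau$ vanishes for all but finitely many pairs $(\sigma,\tau)$ -- are dense in $C_c(G)$ for the $L^1$-norm. This follows by first smoothing $f$ on both sides by convolution over $K$ with an approximate identity of $K$, and then replacing the latter by nearby $K$-finite (trigonometric) functions, which is possible by the Peter--Weyl theorem; the resulting $g$ satisfies $\|f-g\|_1<\eps$ and has support in a fixed compact set. For such a $K\times K$-finite $g$ one has the \emph{finite} decomposition $\pi(g)=\sum_{\sigma,\tau}P_\sigma\pi(g)P_\tau$, a finite sum of finite-rank operators, hence $\pi(g)$ itself is of finite rank. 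Since $\|\pi(h)\|\le\|h\|_1$ and the compact operators are closed in the operator norm, $\pi(f)=\lim\pi(g)$ is compact for every $f\in C_c(G)$. Thus every $\pi\in\widehat{G}$ is a CCR representation, i.e. $G$ is CCR.

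For (ii) it suffices, by the same density-plus-closedness reasoning applied now to the trace-class operators, to prove that $\pi(f)$ is trace class for $f\in C_c^\infty(G)$; in fact I would bound its trace norm directly by
$$\sum_{\sigma,\tau}\|P_\sigma\pi(f)P_\tau\|_{\mathrm{tr}}\le\sum_{\sigma,\tau}\min(\dim\H_\sigma,\dim\H_\tau)\,\|\pi(e_\sigma*f*e_\tau)\|.$$
Uniform largeness yields $\min(\dim\H_\sigma,\dim\H_\tau)\le M\,d_\sigma d_\tau$, while the smoothness of $f$ supplies the decay that makes the series converge: using the Casimir element $\Omega$ of the Lie part of $K$, an integration by parts in the $K$-variables transfers $(1+\Omega)^N$ off $e_\sigma$ and $e_\tau$ and onto $f$, which gives $\|e_\sigma*f*e_\tau\|_1\le C_N\,d_\sigma d_\tau\,(1+\lambda_\sigma)^{-N}(1+\lambda_\tau)^{-N}$, where $\lambda_\sigma$ is the Casimir eigenvalue on $\sigma$ and $C_N$ is built from an $L^1$-norm of derivatives of $f$; in the totally disconnected directions $f$ is locally constant, so only finitely many types contribute there. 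Combining these, the displayed sum is dominated by $MC_N\big(\sum_\sigma d_\sigma^2(1+\lambda_\sigma)^{-N}\big)^2$, which is finite once $N$ exceeds the relevant dimension (Weyl's law for $\widehat{K}$). Since the partial sums converge to $\pi(f)$ in the strong operator topology and are Cauchy in trace norm, $\pi(f)$ is trace class, so $G$ is a trace class group.

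The main obstacle is exactly the convergence of the trace-norm series in (ii): one must match the rapid decay coming from the smoothness of the test function against the growth of the dimensions $\dim\H_\sigma$, and it is precisely the inequality $m(\sigma,\pi|_K)\le M\dim\sigma$ -- uniform largeness, not mere largeness -- together with the summability of $\sum_\sigma d_\sigma^2(1+\lambda_\sigma)^{-N}$ that closes the estimate. Making the Casimir integration by parts rigorous in the generality of Bruhat's test functions (a compact Lie quotient carrying $\Omega$, a totally disconnected part forcing finiteness of the occurring types) is the technical heart of the argument; by contrast, the CCR statement (i) is comparatively soft, resting only on the finite-dimensionality of each $\H_\sigma$ and the $L^1$-density of $K\times K$-finite functions.
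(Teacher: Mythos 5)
Your argument is correct in substance, but for part (ii) it takes a genuinely different route from the paper. For (i) you have essentially written out the standard proof (finite-dimensionality of each $K$-isotypic component plus $L^1$-density of $K\times K$-finite functions), which the paper simply quotes as \cite[Theorem 4.5.7.1]{Warner}; that part is fine. For (ii) the paper does not estimate the trace norm directly: it first invokes \cite[Theorem 4.5.7.4]{Warner} to conclude that uniform largeness makes $\pi(f)$ Hilbert--Schmidt for every $f\in C_c(G)$ (no smoothness needed), and then upgrades to trace class for $f\in C_c^\infty(G)$ via \cite[Proposition 1.6]{Deitmar-vanDijk}, i.e.\ ultimately via the Dixmier--Malliavin factorization $C_c^\infty(G)=C_c^\infty(G)\ast C_c^\infty(G)$, writing $\pi(f)=\pi(g)\pi(h)$ as a product of two Hilbert--Schmidt operators. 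Your direct estimate
$\sum_{\sigma,\tau}\min(\dim\H_\sigma,\dim\H_\tau)\,\|e_\sigma\ast f\ast e_\tau\|_1$,
with $\min(\dim\H_\sigma,\dim\H_\tau)\le M d_\sigma d_\tau$ from uniform largeness and the Casimir decay $\|e_\sigma\ast f\ast e_\tau\|_1\le C_N d_\sigma d_\tau(1+\lambda_\sigma)^{-N}(1+\lambda_\tau)^{-N}$, is a valid alternative and is self-contained when $G$ is a Lie group and $K$ a compact Lie group --- which is in fact the only case where the paper applies (ii), namely Harish-Chandra's theorem for connected reductive Lie groups with finite center. What your route buys is independence from the factorization theorem and an explicit trace-norm bound; what it costs is exactly the point you flag yourself: for a general second countable $G$ the subgroup $K$ need not be Lie, and making the Casimir integration by parts work for Bruhat's test functions (passing to compact Lie quotients $K/H$ and controlling the finitely many $K$-types surviving the totally disconnected directions) is left as a sketch rather than carried out. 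That reduction is routine but not free, and the paper's citation-based proof avoids it entirely; if you want your version to stand alone at the stated level of generality, you should either carry out that reduction or restrict the statement to the Lie case and handle the general case as the paper does.
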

\begin{proof}
Item (i) is well known (see \cite[Theorem 4.5.7.1]{Warner}).

Assume that $G$ contains a uniformly large compact subgroup.
Let $f\in C_c(G)$ and $\pi\in \widehat{G}.$ Then
$\pi(f)$ is a Hilbert-Schmidt operator (\cite[Theorem 4.5.7.4]{Warner}).
It follows from \cite[Proposition 1.6]{Deitmar-vanDijk} that 
$\pi(f)$ is a trace class operator if $f\in C_c^\infty(G)$.
\end{proof}

We introduce further notions in the context of totally disconnected groups.
\begin{definition}
\label{Def-AdmissibleComp}

\begin{itemize}
\item[(i)] A representation $\pi: G\to GL(V)$  in a complex vector space $V$ 
is said to be \textbf{smooth}
if the stabilizer in $G$ of every $v\in V$ is open. 
\item[(ii)]  A smooth representation or unitary representation $(\pi, V)$  
  of $G$   is \textbf{admissible}
if  the space $V^K$ of $K$-fixed vectors in $V$   is finite dimensional, for
every  compact open subgroup $K$ of   $G.$
\item[(ii)]  $G$ is said to have a \textbf{uniformly admissible} smooth dual 
 (respectively,  unitary dual) if, for every compact open subgroup 
 $K$ of $G,$ there exists a constant $N(K)$ such that
$\dim V^K\leq N(K),$ for every irreducible smooth (respectively,
irreducible unitary) representation $(\pi, V)$   of $G$.
\end{itemize}
\end{definition}
Let  $G$ be a   totally disconnected  locally compact group.
For a  compact open subgroup $K$ of   $G,$ let 
$\H(G, K)$ be the convolution algebra of continuous functions
on $G$ which are bi-invariant under $K.$ 
The algebra $\H(G,K)$ is a $*$-algebra,
for the involution given by 
$$
f^*(g)=\Delta(g)\overline{f(g^{-1})} \tout f\in\H(G,K), g\in G,
$$
where $\Delta$ is the  modular function of $G.$
Observe that 
$$\H(G,K)= e_K \ast C_c(G) \ast e_K,$$ where
$e_K= \dfrac{1}{\mu(K)}\Un_K$ and $\mu$ is a Haar measure on $G.$

Let $(\pi, V)$ be a  smooth (respectively, unitary) representation of $G$
and let $K$ be a compact open subgroup of $G.$
A representation (respectively, a $*$-representation) $\pi_K$ of  $\H(G,K)$ is
defined on $V^K$ by 
$$
\pi_K(f) v:=\int_{G}f(g) \pi(g)v d\mu(g)
\tout f\in \H(G,K),\,v\in V^K.
$$
If $\pi$ is  irreducible, then  $\pi_K$ is  an algebraically (respectively, 
topologically) irreducible  representation of $\H(G,K).$
Moreover, every  algebraically (respectively, 
topologically) irreducible  representation of $\H(G,K)$ is of the form 
$\pi_K$ for some irreducible smooth (respectively, unitary) representation $\pi$ of $G$.

\begin{proposition}
\label{Pro-AdmissibleCompactSubgroups}
Let $G$ be a totally disconnected  locally compact group.
\begin{itemize}
\item[(i)] Assume that every   irreducible unitary representation of $G$     is  admissible. Then $G$ is  trace class.
\item[(ii)] Assume that $G$ has a uniformly admissible smooth dual.
Then $G$ has  a uniformly admissible unitary  dual.
 \item[(iii)] Assume that $G$ has  a uniformly admissible unitary dual.
 Then every  compact open subgroup of   $G$ is large.

\end{itemize}
\end{proposition}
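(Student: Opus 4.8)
The common thread is the dictionary, recalled just above, between irreducible unitary (respectively, smooth) representations of $G$ and topologically (respectively, algebraically) irreducible representations of the Hecke algebras $\H(G,K)$, together with the fact that for a unitary representation $(\pi,V)$ the operator $\pi(e_K)$ is the orthogonal projection of $V$ onto the subspace $V^K$ of $K$-fixed vectors. For (i), fix an irreducible unitary representation $(\pi,V)$ and a function $f\in C_c^\infty(G)$. Since $G$ is totally disconnected, $f$ is locally constant with compact support and hence bi-invariant under some compact open subgroup $K$, so that $f\in\H(G,K)$ and $f=e_K\ast f\ast e_K$. Therefore $\pi(f)=\pi(e_K)\pi(f)\pi(e_K)$ has range contained in $V^K$ and vanishes on $(V^K)^\perp$. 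By admissibility $V^K$ is finite dimensional, so $\pi(f)$ has finite rank and is in particular of trace class. As $f$ and $\pi$ were arbitrary, $G$ is trace class.

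For (iii), let $K$ be a compact open subgroup and $\sigma\in\widehat{K}$; we must bound $m(\sigma,\pi|_K)$ uniformly in $\pi\in\widehat{G}$. As $K$ is compact and totally disconnected and $\sigma$ is finite dimensional, the kernel $K'=\Ker\sigma$ is a compact open normal subgroup of $K$, hence a compact open subgroup of $G$, and $\sigma$ factors through the finite group $K/K'$. Since $K'$ is normal in $K$, the finite-dimensional space $V^{K'}$ is $K$-invariant and carries a representation of $K/K'$ in which $\sigma$ occurs with the same multiplicity as in $\pi|_K$; consequently $m(\sigma,\pi|_K)=m(\sigma,V^{K'})\le\dim V^{K'}\le N(K')$, the last bound coming from uniform admissibility of the unitary dual. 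As $N(K')$ is independent of $\pi$, this shows $\sup_{\pi\in\widehat{G}}m(\sigma,\pi|_K)<\infty$, so $K$ is large in the sense of Definition~\ref{Def-LargeCompact}.

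Part (ii) is the substantial one, and I would approach it through polynomial identity theory. Uniform admissibility of the smooth dual means exactly that every algebraically irreducible representation of $\H(G,K)$ has dimension at most $N(K)$. The quotient $\H(G,K)/J$ by its Jacobson radical $J$ is a subdirect product of primitive algebras, each of which, possessing a faithful irreducible module of dimension $\le N(K)$ over the algebraically closed field $\CCC$, is by the Jacobson density theorem a full matrix algebra $M_d(\CCC)$ with $d\le N(K)$. By the Amitsur--Levitzki theorem each such factor satisfies the standard polynomial identity of degree $2N(K)$, and hence so does $\H(G,K)/J$. Now let $(\pi,V)$ be irreducible unitary, so that $\pi_K$ is a topologically irreducible $*$-representation of the \emph{unital} algebra $\H(G,K)$ on $V^K$. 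One first checks that $\pi_K$ annihilates $J$: for $y\in J$ the element $\mathrm{Id}-\lambda\pi_K(y)=\pi_K(e_K-\lambda y)$ is invertible in $\L(V^K)$ for every $\lambda\in\CCC$, so $\pi_K(y)$ is quasinilpotent; applied to $y=x^*x\in J$ for $x\in J$, positivity of the self-adjoint operator $\pi_K(x)^*\pi_K(x)$ forces it to vanish, whence $\pi_K(x)=0$. Thus $\pi_K$ factors through $\H(G,K)/J$, so $\pi_K(\H(G,K))$ satisfies the standard identity of degree $2N(K)$. By Kaplansky's density theorem this identity is inherited by the weak-operator closure $\L(V^K)$, and since $M_d(\CCC)$ satisfies it only for $d\le N(K)$, we conclude $\dim V^K\le N(K)$.

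The crux, and the step I expect to demand the most care, is precisely this passage in (ii) from the algebraic bound on algebraically irreducible $\H(G,K)$-modules to the analytic statement for topologically irreducible $*$-representations: both the annihilation of the Jacobson radical (relying on unitality of $\H(G,K)$ and on positivity in the $*$-representation) and the stability of the standard polynomial identity under the weak-operator closure must be justified with some attention. Granting (i)--(iii), they dovetail with Proposition~\ref{Pro-LargeCompactSubgroups}: uniform admissibility of the unitary dual makes every compact open subgroup large, whence $G$ is CCR, while admissibility of every irreducible unitary representation makes $G$ trace class.
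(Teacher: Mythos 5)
Your proofs of (i) and (iii) are essentially the paper's own. For (i), the paper likewise writes $C_c^\infty(G)=\bigcup_K \H(G,K)$ and observes that $\pi(f)=\pi(e_K)\pi(f)\pi(e_K)$ has finite rank since $\pi(e_K)$ projects onto the finite dimensional space $V^K$; for (iii), the paper chooses a normal open subgroup $L$ of $K$ with $\sigma|_L$ trivial and bounds $m(\sigma,\pi|_K)\dim\sigma\le\dim\H^L\le N(L)$, which is your argument with the specific choice $L=\Ker\sigma$. The genuine divergence is in (ii): the paper gives no proof at all and simply cites \cite[Theorem B]{First-Rud}, whereas you supply a complete argument via polynomial-identity theory, and it is correct. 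The dictionary recalled in the paper identifies uniform admissibility of the smooth dual with a uniform bound $N(K)$ on the dimension of algebraically irreducible $\H(G,K)$-modules; Jacobson density over the algebraically closed field $\CCC$ makes each primitive quotient of $\H(G,K)$ a matrix algebra $M_d(\CCC)$ with $d\le N(K)$; Amitsur--Levitzki then yields the standard identity $s_{2N(K)}$ on $\H(G,K)/J$; the radical is killed by any topologically irreducible $*$-representation via your quasinilpotency-plus-positivity argument (note that you only need $x^*x\in J$ for $x\in J$, which follows from $J$ being a left ideal, so no $*$-invariance of $J$ has to be established); and the multilinear identity passes to the weak-operator closure $\L(V^K)$, which is all of $\L(V^K)$ by topological irreducibility and the double commutant theorem. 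For that last step, separate weak-operator continuity of multiplication, applied one variable at a time, is a cleaner justification than the appeal to Kaplansky density, though the Kaplansky route (bounded nets plus joint strong continuity of multiplication on bounded sets) also works as you indicate. In substance you have reconstructed the proof of the theorem of First and R\"ud that the paper quotes: the citation buys the paper brevity, while your argument buys self-containedness, making the proposition (and its use in Theorem~\ref{Theo-TypI-Red}) independent of the external reference.
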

\begin{proof}

To show item (i), observe that the algebra $C_c^\infty(G)$ of test functions on $G$ 
mentioned in the introduction is the union $\bigcup_{K} \H(G,K),$ where
$K$ runs over the compact open subgroups  of   $G.$
Let $K$ be such a subgroup and $(\pi, V)$  an irreducible unitary representation of $G$. Since $\pi(e_K)$  is the orthogonal projection 
on $V^K$ and since  $V^K$ is finite dimensional, $\pi(f)= \pi(e_K)\pi(f) \pi(e_K)$  has finite rank, for every $f\in \H(G,K)$; see also \cite[Theorem 2.3]{Deitmar-vanDijk}.

Item (ii) is proved in \cite[Theorem B]{First-Rud}.

To show  item (iii), let $K$ be a
 compact open subgroup of $G$ and let $\sigma\in  \widehat{K}$.
  Let $(\pi, \H)$ be an irreducible representation
of $G$. Since  $K$ is a totally disconnected compact group, there exists a normal open subgroup $L=L(\sigma)$
of $K$ such that $\sigma|_L$ is the identity representation.
By assumption, there exists an integer
$N(L)$ such that $\dim \H^L \leq N(L)$
for every $(\pi, \H) \in \widehat{G}.$ 
Since  
$$ m(\sigma, \pi|_K) \dim \sigma\leq \dim \H^L,
$$
the claim follows.
\end{proof}

The following immediate corollary is a useful criterion
for the trace class property in the totally disconnected case.

\begin{corollary}\label{cor-CCR-trace-class}
A totally disconnected locally compact group $G$ is trace class if and only if $G$ is CCR.
\end{corollary}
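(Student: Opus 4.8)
The plan is to treat the two implications separately; the forward direction is free and all the content lies in the converse. Since $C_c^\infty(G)$ is dense in $L^1(G,\mu)$, every trace class group is CCR, and this holds for an arbitrary locally compact group (as already noted in the introduction). Thus I only need to prove that a totally disconnected CCR group $G$ is trace class.

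For the converse I would invoke Proposition~\ref{Pro-AdmissibleCompactSubgroups}(i): it suffices to show that every irreducible unitary representation of $G$ is admissible. So fix $(\pi,V)\in\widehat{G}$ and a compact open subgroup $K$ of $G$. The key observation is that $e_K=\dfrac{1}{\mu(K)}\Un_K$ belongs to $C_c(G)$ — indeed $K$, being compact and open, is clopen, so $\Un_K$ is a continuous function with compact support — and that $\pi(e_K)$ is precisely the orthogonal projection of $V$ onto the subspace $V^K$ of $K$-fixed vectors (as recorded in the proof of Proposition~\ref{Pro-AdmissibleCompactSubgroups}).

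Now the CCR hypothesis enters: because $\pi$ is a CCR representation, $\pi(f)$ is a compact operator for every $f\in C_c(G)$, and in particular $\pi(e_K)$ is compact. A compact orthogonal projection necessarily has finite rank, whence
$$
\dim V^K=\operatorname{rank}\pi(e_K)<\infty.
$$
Since $K$ was an arbitrary compact open subgroup, $\pi$ is admissible, and since $\pi\in\widehat{G}$ was arbitrary, every irreducible unitary representation of $G$ is admissible. Proposition~\ref{Pro-AdmissibleCompactSubgroups}(i) then gives that $G$ is trace class, completing the equivalence.

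There is no genuine obstacle in this argument: it rests on the single elementary fact that a compact projection is finite-dimensional, combined with the already-established admissibility criterion. The only point requiring a moment's care is the membership $e_K\in C_c(G)$ (rather than merely $e_K\in C_c^\infty(G)$), which is exactly what lets the definition of CCR — formulated in terms of $C_c(G)$ — be applied to $e_K$ and thereby deliver admissibility directly.
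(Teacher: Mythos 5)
Your argument is correct and coincides with the paper's own proof: both directions are handled the same way, and the converse rests on exactly the same observation that for a CCR representation the projection $\pi(e_K)$ onto $V^K$ is compact, hence of finite rank, so that every irreducible representation is admissible and Proposition~\ref{Pro-AdmissibleCompactSubgroups}(i) applies. No differences worth noting.
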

\begin{proof} 
As already mentioned, $G$ being trace class implies that $G$ is CCR.
Conversely, assume that $G$ is CCR.
Then, for every compact open subgroup $K$  and every irreducible 
representation $\pi$ of $G$, the projection $\pi(e_K)$ is a compact 
operator  and  has therefore finite rank. Hence, every irreducible unitary representation of $G$ is admissible  and $G$ is  trace class by Proposition~\ref{Pro-AdmissibleCompactSubgroups}.(i).
\end{proof}

As mentioned in the introduction, it is known that reductive algebraic groups
over local fields are  of type I. In fact, 
the following stronger result holds.
\begin{theorem}
\label{Theo-TypI-Red}
\begin{itemize}
\item[(i)] \textbf{(Harish-Chandra)} Let $G$ be a connected reductive Lie group with finite center.
Then every maximal compact subgroup  of $G$ is uniformly large in $G.$
\item[(ii)] \textbf{(Bernstein)} Let  $G=\GG(\kk)$, where $\GG$ is  a reductive linear algebraic group $\GG$ over 
a non archimedean local field $\kk$.  Then  $G$ has a uniformly admissible smooth dual
and hence a uniformly admissible unitary dual.
\end{itemize}
Every group  $G$  as in (i) or (ii) is trace class.
\end{theorem}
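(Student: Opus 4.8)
The plan is to treat the two deep inputs (i) and (ii) as established results of Harish-Chandra and Bernstein respectively---these are precisely the statements I would cite rather than reprove---and then to deduce the trace class property in each case by feeding them into the criteria established in Propositions~\ref{Pro-LargeCompactSubgroups} and~\ref{Pro-AdmissibleCompactSubgroups}.

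For the archimedean case (i), I would invoke Harish-Chandra's theory (\cite{HC}): for a connected reductive Lie group $G$ with finite center, a maximal compact subgroup $K$ is uniformly large, that is, there is an integer $M$ with $m(\sigma, \pi|_K) \le M \dim\sigma$ for all $\sigma \in \widehat{K}$ and all $\pi \in \widehat{G}$. This is exactly the hypothesis of Proposition~\ref{Pro-LargeCompactSubgroups}(ii), so $G$ is trace class. The only point to be careful about is that Harish-Chandra's bounds on $K$-multiplicities indeed yield the \emph{uniform} form of largeness, not merely largeness.

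For the non-archimedean case (ii), I would invoke Bernstein's uniform admissibility theorem (\cite{Bernstein}): $G = \GG(\kk)$ has a uniformly admissible smooth dual. By Proposition~\ref{Pro-AdmissibleCompactSubgroups}(ii), $G$ then has a uniformly admissible unitary dual; in particular, for every compact open subgroup $K$ one has $\dim V^K \le N(K) < +\infty$ for every irreducible unitary representation $(\pi, V)$, so every irreducible unitary representation of $G$ is admissible. Proposition~\ref{Pro-AdmissibleCompactSubgroups}(i) then yields that $G$ is trace class. Alternatively one could route through Proposition~\ref{Pro-AdmissibleCompactSubgroups}(iii) (compact open subgroups are large) together with Corollary~\ref{cor-CCR-trace-class}, but the direct appeal to admissibility is shorter.

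The main obstacle is entirely contained in the two named results themselves: Harish-Chandra's admissibility theorem for real reductive groups and Bernstein's uniform admissibility theorem over $p$-adic fields. Everything beyond these is bookkeeping---matching the precise finiteness statement (uniform largeness; uniform admissibility) to the hypothesis of the appropriate proposition. The one genuinely nonformal transition that is not itself a cited input is the passage from a uniformly admissible smooth dual to a uniformly admissible unitary dual, but this is supplied by Proposition~\ref{Pro-AdmissibleCompactSubgroups}(ii), which rests on \cite{First-Rud}.
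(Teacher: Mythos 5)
Your proposal is correct and follows essentially the same route as the paper: items (i) and (ii) are cited to \cite{HC} and \cite{Bernstein} (the paper's alternative derivation of (ii) uses exactly your chain through admissibility of the smooth dual and Proposition~\ref{Pro-AdmissibleCompactSubgroups}(ii)), and the trace class conclusion is drawn from Propositions~\ref{Pro-LargeCompactSubgroups} and~\ref{Pro-AdmissibleCompactSubgroups} just as you do.
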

\begin{proof}
For (i), we refer  to \cite{HC}. 
Item (ii) follows from \cite{Bernstein}\ in combination with results from 
\cite{HC-vanDijk}. Alternatively, Item (ii) is a direct consequence
of \cite{Bernstein}, of  the fact that   $G$ has an admissible smooth dual
(\cite{BernsteinNotes}),  and of Proposition~\ref{Pro-AdmissibleCompactSubgroups} (ii).

The last statement follows from  Propositions~\ref{Pro-LargeCompactSubgroups} and \ref{Pro-AdmissibleCompactSubgroups}.
\end{proof}

\subsection{Covering groups of reductive groups}
\label{SSect:Covering}
Let $\kk$ be a local field  and $\GG$ a reductive linear algebraic group defined over $\kk$.
When $\kk$ is archimedean, every finite covering group of $G=\GG(\kk)$  is of type I  (see Theorem~\ref{Theo-TypI-Red} (i)).
We need to show that this  result also holds in the non archimedean case, 
 at least for certain finite  covers of $G$.

As the following example shows, a central finite extension
of a locally compact group of type I is in general not of
type I.

 \begin{example}
\label{HeisenbergFp}
 Let $\FF_p$ be the field of order $p$ for a  prime $p \ge 3$.
Let $V$ be the  vector space over $\FF_p$ of  sequences $(x_n)_{n\in \N}$ 
with $x_n\in \FF_p$ and $x_n = 0$ for almost all $i \in \N$,
and let  $\omega$ be the symplectic form on $V \oplus V$, defined by
$$
\omega((x,y),(x',y')) \, = \, \sum_{n \in \N} (x_ny'_n -y_nx'_n)
\hskip.5cm \text{for} \hskip.2cm 
(x,y), (x',y') \in V \oplus V.
$$
Let  $H_\infty(\FF_p)$ be the discrete group with underlying set $V \oplus V \oplus \FF_p$
and with multiplication defined by
$$
(x,y,z) (x',y',z') \, = \, (x+x', y+y', z+z'+ \omega((x,y),(x',y')))$$
for all $(x,y,z), (x',y',z') \in H_\infty(\FF_p).$

The ``Heisenberg group" $H_\infty(\FF_p)$ is a finite central cover of the abelian group $ V\oplus V.$
It is easily checked that $H_\infty(\FF_p)$ is not  virtually abelian.
So,  $H_\infty(\FF_p)$
is not of type I, by Thoma's characterization of discrete groups of type I
(\cite{Thoma}).
\end{example}
 
 Let  $\kk$ be a  non archimedean  local field,   $\GG$ a reductive linear algebraic group defined over $\kk$, and $G= \GG(\kk)$.

 Let $\widetilde{G}$ be a finite central extension of $G$,
that is, there exist a finite normal subgroup $F$ contained in the center of 
$\widetilde{G}$ and a continuous surjective homomorphism
$p: \widetilde{G} \to G$ with $\ker p=F.$

Let $P$ be a parabolic subgroup of $G$.
The subgroup $\widetilde P= p^{-1}(P)$ is called a \textbf{parabolic subgroup}
of $\widetilde{G}.$
Let $N$ be the unipotent radical of $P$
and  $P=MN$ a Levi decomposition.
 There exists a unique closed subgroup $\underline{N}$ of $\widetilde{G}$
 such that $p|_{\underline{N}}: \underline{N}\to N$ is an isomorphism (see \cite[11 Lemme]{Duflo}).
By its uniqueness property, $\underline{N}$ is a normal subgroup of $\widetilde{P},$
called the \textbf{unipotent radical}\, of $\widetilde P$, and we have a \textbf{Levi decomposition}  $\widetilde P= \widetilde M \underline{N},$ 
 where   $\widetilde M= p^{-1}(M).$

We recall a few facts about the structure of $G=\GG(\kk)$ from \cite{Bruhat-Tits}.

 Let $P_0$ be a fixed minimal parabolic subgroup of $G$
 with unipotent radical $N_0$
 and   Levi decomposition  $P_0=M_0N_0$, where $N_0$ is the unipotent radical
 of $P_0$.
Let  $A_0$ be a maximal split torus contained
 in $P_0.$ 
 For any root $\alpha$ of $A_0,$ let $\xi_\alpha$ be  the corresponding character of 
 $A_0$ and let $A_0^+$ be the set of all $a\in A_0$ with $|\xi_\alpha(a)|\geq 1.$
Let   $Z$ be a maximal split torus  contained in the center of $G.$
Let $P_0^-$ be the opposite parabolic subgroup to $P_0$ and 
$N_0^-$ the unipotent radical of $P_0^-.$

 There exists a   compact maximal subgroup $K_0$ of $G$ such that 
 $G=K_0 P_0$ (Iwasawa decomposition). 
 We have a Cartan decomposition 
 $$G= K_0 S Z \Omega K_0,$$ where $S$ is a finitely generated
  semigroup contained in $A_0^+$ 
   and $\Omega$ is a finite subset of $G;$
   moreover, the following holds:
 every neighbourhood  of $e$ contains a compact open subgroup 
$K$ with the following properties:
\begin{itemize}
\item[(i)] $K\subset K_0$ and $K_0$ normalizes $K;$
\item[(ii)] We have 
$K= K^+ K^-,$ where $K^+= K\cap P_0$ and $K^-= K\cap N_0^-;$ 
\item[(iii)] $a^{-1}K^+ a\subset K^+$ and $a K^- a^{-1}\subset K^-$ for every $a\in S.$
  \end{itemize}
For all this, see \cite{Bruhat-Tits}.

We assume from now on that $\widetilde{G}$ satisfies the following  Condition $(*)$:
\begin{itemize}
\item[$(*)$] For every $g,h\in \widetilde{G},$ we have
$$gh=hg\Longleftrightarrow p(g)p(h)=p(h)p(g).$$
\end{itemize}
Set 
$$
\widetilde{K_0}= p^{-1}(K_0),\quad \widetilde{S}= p^{-1}(S),\quad \widetilde{Z} = p^{-1}(Z),  \quad\text{and} \quad\widetilde{\Omega}= p^{-1}(\Omega).
$$
Let $\underline{N_0}$ and $\underline{N_0^-}$ be the unique subgroups of $\widetilde{G}$ 
corresponding to $N_0$ and $N_0^-$.

\begin{proposition}
\label{Pro-Covering}
We have 
\begin{enumerate}
\item $\widetilde G= \widetilde K_0 \widetilde S \widetilde Z \widetilde \Omega \widetilde K_0;$
\item $\widetilde Z$ is contained in the center of $G;$
\item $\widetilde S$ is a finitely generated commutative semigroup of $\widetilde{G};$
\item every neighbourhood  of $e$ in  $\widetilde{G}$ contains a compact open subgroup 
$L$ with the following properties:
\begin{itemize}
\item[(4.i)] $L\subset \widetilde{K_0}$ and $ \widetilde{K_0}$ normalizes $L;$
\item[(4.ii)] $L= L^+ L^-,$ where $L^+= L\cap \widetilde{P_0}$ and $L^-= 
L\cap \underline{N_0^-};$\
\item[(4.iii)] $\widetilde{a}^{-1}{L^+} \widetilde{a}\subset L^+$ and $\widetilde{a} L^-\widetilde{a}^{-1}\subset L^-$ for every $\widetilde{a}\in \widetilde S .$
\end{itemize}
\end{enumerate}
 
\end{proposition}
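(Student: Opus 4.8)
The plan is to establish each assertion of Proposition~\ref{Pro-Covering} by transporting the corresponding structural statement about $G$ through the covering map $p:\widetilde{G}\to G$, using Condition $(*)$ to control commutativity in $\widetilde{G}$ and the uniqueness of the lifted unipotent subgroups $\underline{N_0}$, $\underline{N_0^-}$ where needed. First I would treat (1): since $p$ is surjective with kernel $F\subset Z(\widetilde{G})$, applying $p^{-1}$ to the Cartan decomposition $G=K_0SZ\Omega K_0$ gives $\widetilde{G}=p^{-1}(K_0SZ\Omega K_0)$, and because $F\subset\widetilde{K_0}$ (as $F=\ker p$ lies in every $p^{-1}(X)$ for $X\ni e$), the product $\widetilde{K_0}\widetilde{S}\widetilde{Z}\widetilde{\Omega}\widetilde{K_0}$ already contains $F\cdot(K_0SZ\Omega K_0)$'s full preimage; I would check that $p(\widetilde{K_0}\widetilde{S}\widetilde{Z}\widetilde{\Omega}\widetilde{K_0})=K_0SZ\Omega K_0=G$ and that the $F$-ambiguity is absorbed since $F\subset\widetilde{K_0}$, giving equality of sets. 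For (2), $Z$ is central in $G$, so for $z\in\widetilde{Z}$ and any $h\in\widetilde{G}$ we have $p(z)p(h)=p(h)p(z)$, whence by Condition $(*)$ (the $\Leftarrow$ direction) $zh=hz$; thus $\widetilde{Z}\subset Z(\widetilde{G})$.

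For (3), the semigroup $S\subset A_0^+$ is finitely generated and commutative; I would lift a finite generating set of $S$ to elements of $\widetilde{G}$, note that $\widetilde{S}=p^{-1}(S)$ is generated by these lifts together with $F$, and invoke Condition $(*)$ again: since any two elements of $S$ commute in $G$, their preimages commute in $\widetilde{G}$, so $\widetilde{S}$ is commutative. Finite generation of $\widetilde{S}$ as a semigroup follows because $F$ is finite and $S$ is finitely generated, so $\widetilde{S}$ is generated by finitely many lifts of the generators of $S$ together with the finitely many elements of $F$. The commutativity step is where Condition $(*)$ is essential, since commutativity of a semigroup does not automatically lift through a central extension.

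The main work, and the main obstacle, lies in (4), the construction of the compact open subgroups $L$ with the factorization and contraction properties. The plan is to start from a compact open subgroup $K\subset K_0$ of $G$ satisfying (i)--(iii), chosen inside a prescribed neighbourhood of $e$, and to produce a lift. Since $F$ is finite and central, and since $p$ is a covering with discrete kernel, $p$ restricts to a local homeomorphism near $e$; I would choose $K$ small enough that $p^{-1}(K)$ splits as $F\times L_0$ for a compact open subgroup $L_0$ mapping isomorphically onto $K$, or alternatively simply set $L=p^{-1}(K)$ and verify the properties directly. Taking $L=p^{-1}(K)$ makes $L$ compact open with $L\subset\widetilde{K_0}$ and normalized by $\widetilde{K_0}$ (since $K_0$ normalizes $K$ and $p$ is equivariant for conjugation modulo the central $F$), giving (4.i). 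For (4.ii), the decomposition $K=K^+K^-$ with $K^-=K\cap N_0^-$ must be lifted: here I would use that $\underline{N_0^-}$ is the \emph{unique} subgroup of $\widetilde{G}$ mapping isomorphically onto $N_0^-$, so $L\cap\underline{N_0^-}$ maps isomorphically onto $K\cap N_0^-=K^-$, and set $L^-=L\cap\underline{N_0^-}$, $L^+=L\cap\widetilde{P_0}$; the factorization $L=L^+L^-$ then follows by projecting the identity $K=K^+K^-$ and lifting, using that $p(L^+)=K^+$ and $p(L^-)=K^-$ together with $F\subset L^+$. For (4.iii), the contraction properties $\widetilde{a}^{-1}L^+\widetilde{a}\subset L^+$ and $\widetilde{a}L^-\widetilde{a}^{-1}\subset L^-$ for $\widetilde{a}\in\widetilde{S}$ follow by applying $p$: the corresponding inclusions $a^{-1}K^+a\subset K^+$ and $aK^-a^{-1}\subset K^-$ hold for $a=p(\widetilde{a})\in S$, and since $L^\pm=p^{-1}(K^\pm)\cap(\ \cdot\ )$ with $\underline{N_0^-}$ normal in its preimage and $F$ central, conjugation by $\widetilde{a}$ descends compatibly. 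The delicate point I expect to be hardest is ensuring that the lifted factorization $L=L^+L^-$ is genuinely a product decomposition in $\widetilde{G}$ and that the normality/contraction behaviour of $\underline{N_0^-}$ is preserved under conjugation by $\widetilde{S}$; this requires the uniqueness of $\underline{N_0^-}$ (so that conjugates of $\underline{N_0^-}$ by elements normalizing $N_0^-$ stay inside $\underline{N_0^-}$) rather than any explicit coordinate computation.
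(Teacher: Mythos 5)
Parts (1)--(3) of your argument are fine and essentially match the paper's: Condition $(*)$ gives the centrality of $\widetilde Z$ and the commutativity of $\widetilde S$, and finiteness of $F=\ker p$ gives finite generation. The genuine gap is in part (4), where after mentioning two options you settle on $L=p^{-1}(K)$. Since $F\subset p^{-1}(K)$ for every $K$, such an $L$ always contains the fixed finite set $F$, so it cannot lie inside any neighbourhood of $e$ in $\widetilde G$ small enough to exclude the nontrivial elements of $F$. Statement (4) asserts that \emph{every} neighbourhood of $e$ contains such an $L$ --- this is exactly what is needed to verify Bernstein's Assertion A for $\widetilde G$ in Corollary~\ref{Cor-Pro-Covering} --- so the full-preimage construction cannot prove it whenever $F\neq\{e\}$.

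Your first alternative (split $p^{-1}(K)$ as $K\times F$ and take the identity sheet $K\times\{e\}$) is the correct route and is what the paper does, but you do not carry it out, and it is not automatic. One must (a) place $K$ inside a symmetric neighbourhood $V$ with $V^2$ contained in a set on which $p$ is injective, so that $K\times\{e\}$ is genuinely a subgroup; (b) pass to a further finite-index subgroup $K_2$ so that the chosen sheet is normal in $\widetilde{K_0}$, since conjugation could a priori permute the sheets; and (c), most importantly for (4.iii), control the $F$-ambiguity under conjugation by $\widetilde S$: knowing $p(\widetilde a^{-1}L^+\widetilde a)\subset K^+$ only places the conjugate inside $p^{-1}(K^+)=K^+\times F$, not inside $L^+=K^+\times\{e\}$. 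The paper resolves (c) by fixing a finite generating set $\widetilde{a_1},\dots,\widetilde{a_n}$ of $\widetilde S$ and shrinking $V$ so that $\widetilde{a_i}^{\pm1}(V\times\{e\})\widetilde{a_i}^{\mp1}\subset U\times\{e\}$, which pins the conjugates to the identity sheet. Your phrase ``conjugation by $\widetilde a$ descends compatibly'' glosses over precisely this point; the uniqueness of $\underline{N_0^-}$ helps on the $L^-$ side but does nothing for $L^+$.
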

\begin{proof}
It is clear that property (1) holds.
It follows from Condition $(*)$ that 
$ \widetilde{Z}$ is contained in the center of  $\widetilde{G}$
and that $ \widetilde{S}$ is commutative. 
Moreover, $ \widetilde{S}$ is a  finitely generated commutative semigroup of $\widetilde{G},$ since $\widetilde G$ is a finite 
covering of $G.$
So,  properties (2) and (3) are satisfied.

Let $\widetilde U$ be an open neighbourhood of $e$ in 
$\widetilde{G}$ and set $U:= p(\widetilde U).$
We may assume that $\widetilde{U} \cap F=\{e\},$
where $F= \ker p$; so,
 $$p|_{\widetilde U}: \widetilde{U} \to U$$ 
is a homeomorphism and we can therefore  identify 
$p^{-1}(U)$
with $U\times F$ and  $\widetilde U$ with the open open subset $U\times \{e\}$
of $U\times F$.

Let $ \widetilde{a_1}, \dots,  \widetilde{a_n}$ be a generating set 
of $\widetilde{S}.$
Choose a neighborhood $V$ of $e$ contained in $U$ 
such that 
$V^{-1}= V,$ $V^2\subset U,$ and 
$$\widetilde{a_i} (V\times \{e\}) \widetilde{a_i}^{-1} \subset U\times \{e\}
\quad\text{and} \quad \widetilde{a_i}^{-1} (V\times \{e\})  \widetilde{a_i}  \subset U\times \{e\}.
$$
for every $i\in\{1,\dots, n\}$.

Fix a  compact open subgroup $K_1$
contained in $V$.
We can identify $p^{-1}(K_1)$ with  $K_1\times F,$
as  topological groups.
Observe that $K_1\times \{e\}$  is an open subgroup of the compact group
$\widetilde{K_0}$ and has therefore finite index in $\widetilde{K_0}$.
Hence, there exists a subgroup of finite index  $K_2$ of $K_1$ such that 
$K_2\times \{e\}$ is normal in $\widetilde{K_0}$.

Let $K$ be a compact open subgroup of $K_0$ 
contained in $K_2$ and  with the  properties  (i),  (ii), and (iii) as above.
Set 
 $$ L:= K\times \{e\} \quad\text{and} \quad  L^+:= L\cap \widetilde{P_0},\quad
L^-:= L\cap \underline{N_0^-}.
$$
Since $K$ is normal in $K_0$
 and  since $L$ is contained in the normal subgroup $K_2\times \{e\}$ of
 $\widetilde{K_0}=p^{-1}(K_0)$, it is clear that $L$ is normal in $\widetilde{K_0}$.

Moreover, as  $K= K^+ K^-$ for $K^+= K\cap P_0$ and $K^-= K\cap N_0^-,$
we have $L=L^+ L^-$. 
So, properties  (4.i) and (4.ii) are satisfied. Property  (4.iii) is also satisfied.
Indeed, let $i\in\{1,\dots, n\}$ and $a_i= p(\widetilde{a_i}).$  On the one hand, we have
$$
p(\widetilde{a_i}^{-1} L^+\widetilde{a_i})= a_i^{-1} K^+{a_i}\subset K^+=p(L^+)
$$
and 
$$
p(\widetilde{a_i} L^-\widetilde{a_i}^{-1})= a_i K^-{a_i}^{-1}\subset K^-= p(L^-).
$$
On the other hand, since $L$ is contained in $V\times \{e\},$
we have $$
\widetilde{a_i}^{-1} L^+\widetilde{a_i} \subset U\times \{e\}
\quad\text{and} \quad \widetilde{a_i}L^-\widetilde{a_i}^{-1}  \subset U\times \{e\}
$$
and the claim follows.
\end{proof}

It is known (see \cite[Theorem 15]{BernsteinNotes}) that every irreducible smooth 
representation of $G$ is admissible;
the following  proposition   extends this result to the central cover $\widetilde{G}.$
The proof depends on an analysis of cuspidal representations of $\widetilde{G}.$

Let $\widetilde P$ be a parabolic subgroup of $\widetilde{G}$
with Levi decomposition $\widetilde P=\widetilde M \underline{N},$
as defined above.

Let  $(\sigma, W)$ be a smooth representation of  $\widetilde{M}.$
Since $\widetilde P/\underline{N}= \widetilde M,$
the representation $\sigma$ extends  to a unique representation of $\widetilde{P}$ which is trivial on $\underline{N}$. One defines the induced representation $\Ind_{\widetilde P}^{\widetilde G}\sigma$ to be the right regular representation of $\widetilde{G}$ on the vector space $V$ of all 
locally constant functions $\widetilde{G}\to W$ with $f(pg)= \sigma(p) f(g)$ for all 
$g\in \widetilde{G}$ and $p\in \widetilde{P}.$

Let $(\pi, V)$ be a smooth representation of  $\widetilde{G}.$
Let $V(\underline{N})$ be the $\pi(\widetilde{M})$-invariant subspace generated 
by $\{\pi(n)v-v\mid n\in \underline{N},\, v\in V\}$; so, a smooth representation 
$\pi_{\underline{N}}$ of $\widetilde{M}$ is defined on $V_{\underline{N}}:= V/V(\underline{N})$.

An irreducible smooth representation $(\pi, V)$  of  $\widetilde{G}$ is  called \textbf{cuspidal} if 
 $V_{\underline{N}}=\{0\},$  for all unipotent radicals $\underline{N}$ of proper parabolic subgroups of $\widetilde{G}$.
\begin{proposition}
\label{Pro-AdmissibleCover} 
Every irreducible  smooth 
representation of $\widetilde{G}$ is admissible.
\end{proposition}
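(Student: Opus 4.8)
The plan is to prove admissibility by the standard dichotomy: reduce the general case to cuspidal representations via parabolic induction, and handle cuspidal representations directly using the geometry of the compact open subgroups $L$ produced in Proposition~\ref{Pro-Covering}. The overall strategy mirrors the classical proof of admissibility for $p$-adic reductive groups (Jacquet, Harish-Chandra, Bernstein--Zelevinsky), transplanted to the central cover $\widetilde{G}$.

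First I would set up the reduction. Let $(\pi,V)$ be an irreducible smooth representation of $\widetilde{G}$. By a theorem of Jacquet (the non-vanishing of some Jacquet module), either $\pi$ is cuspidal, or there is a proper parabolic $\widetilde P = \widetilde M\,\underline N$ such that $V_{\underline N}\neq\{0\}$, in which case, by Frobenius reciprocity for the smooth induction defined above, $\pi$ embeds into $\Ind_{\widetilde P}^{\widetilde G}\tau$ for some irreducible subquotient $\tau$ of $\pi_{\underline N}$ on $\widetilde M$. Since $\widetilde M$ is itself a finite central cover of the Levi $M=\MM(\kk)$ of strictly smaller semisimple rank, an induction on the semisimple rank lets me assume $\tau$ is admissible. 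I would then invoke the standard fact that parabolic induction $\Ind_{\widetilde P}^{\widetilde G}$ preserves admissibility: for a compact open $L$ as in Proposition~\ref{Pro-Covering}, the Iwasawa-type decomposition $\widetilde G = \widetilde{K_0}\,\widetilde P$ together with finiteness of $\widetilde{K_0}/L$ bounds $\dim V^L$ in terms of $\dim W^{L\cap \widetilde M}$, which is finite by the inductive hypothesis. Hence $\pi$, as a subrepresentation, is admissible.

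The main obstacle, and the heart of the argument, is the cuspidal base case: to show that an irreducible cuspidal $(\pi,V)$ has $\dim V^L<\infty$ for every $L$ of the form guaranteed by Proposition~\ref{Pro-Covering}. Here I would use exactly properties (4.i)--(4.iii). The plan is to show that for $v\in V^L$, the matrix coefficients are supported, modulo the center, on a compact set. Concretely, using the Cartan-type decomposition $\widetilde G = \widetilde{K_0}\,\widetilde S\,\widetilde Z\,\widetilde\Omega\,\widetilde{K_0}$ from Proposition~\ref{Pro-Covering}(1) and the normality of $L$ in $\widetilde{K_0}$, it suffices to control the action of the commutative semigroup $\widetilde S$. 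For $\widetilde a\in\widetilde S$ deep enough, property (4.iii) forces $\pi(e_L)\pi(\widetilde a)\pi(e_L)$ to vanish on $V^L$ unless $\widetilde a$ lies in a fixed compact-mod-center region: this is precisely the place where cuspidality enters, since the contraction $\widetilde a^{-1}L^+\widetilde a\subset L^+$ and $\widetilde a L^-\widetilde a^{-1}\subset L^-$ lets one relate $\pi(e_L)\pi(\widetilde a)\pi(e_L)v$ to averages over $\underline N$, which are killed by the vanishing of the Jacquet module $V_{\underline N}$. This ``Jacquet's subrepresentation/uniform admissibility'' mechanism shows that the Hecke algebra $\H(\widetilde G,L)$ acts on $V^L$ through a finitely generated module over a polynomial-type subalgebra coming from $\widetilde S\,\widetilde Z$; irreducibility of $\pi$ then forces $V^L$ to be finite dimensional. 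I expect the delicate point to be verifying that Condition~$(*)$ and the commutativity of $\widetilde S$ from Proposition~\ref{Pro-Covering}(2)--(3) are enough to run the finiteness argument over the central extension exactly as in the split case, and that no new phenomena arise from the finite kernel $F$; once the Hecke-algebra module structure is in place, the remaining steps are formal.
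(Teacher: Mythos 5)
Your proposal follows essentially the same route as the paper: reduce to the cuspidal case by embedding $\pi$ into a parabolically induced representation $\Ind_{\widetilde P}^{\widetilde G}\sigma$ (with admissibility preserved under induction because $\widetilde G/\widetilde P$ is compact), and establish admissibility of cuspidal representations via compact support of matrix coefficients modulo the center, using the Cartan decomposition and the contraction properties (4.i)--(4.iii) of the subgroups $L$ from Proposition~\ref{Pro-Covering}. The paper organizes this as four explicit steps transplanted from the classical arguments of Bernstein and Renard, which is exactly what you describe, so your outline is correct.
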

\begin{proof}
The proof is along the lines given for $G=\GG(\kk)$ 
in \cite{BernsteinNotes} or \cite{Renard}.

\vskip.2cm
\noindent
$\bullet$ \emph{First  step:} Let  $(\pi, V)$ be a smooth cuspidal representation
  of  $\widetilde{G}$. Then the matrix coefficients 
  of $\pi$ (that is, the functions $g\mapsto v^*(\pi(g)v)$,
  for $v\in V$ and $v^*\in V^*$) are compactly supported
  modulo the center  of $\widetilde{G}.$
  
  This  follows by an adaptation of the proof of the corresponding result for $G$ 
  as in  \cite[Theorem 14]{BernsteinNotes} or \cite[\S VI.2]{Renard}.

  \vskip.2cm
\noindent
$\bullet$ \emph{Second  step:} Let  $(\pi, V)$ be a smooth cuspidal  representation
  of  $\widetilde{G}$. Then $\pi$ is admissible.
  
  This is the same proof as in the case of $G$ (see \cite[Corollary p.53]{BernsteinNotes}
  or \cite[\S VI.2]{Renard}.

   \vskip.2cm
\noindent
$\bullet$ \emph{Third  step:} Let  $(\pi, V)$ be a smooth irreducible representation
  of  $\widetilde{G}$. Then there exists a   parabolic subgroup $\widetilde P=\widetilde M \underline{N}$ and a  cuspidal representation $\sigma$ of $\widetilde M$ such that 
 $\pi$ is isomorphic to a subrepresentation of  $\Ind_{\widetilde P}^{\widetilde G}\sigma$.
 
 The proof is identical to the proof  in the case of $G$ as in  \cite[Lemma 17]{BernsteinNotes} or \cite[Corollaire p.205]{Renard}. 

   \vskip.2cm
\noindent
$\bullet$ \emph{Fourth  step:}  Let  $(\pi, V)$ be a smooth irreducible representation
  of  $\widetilde{G}$. Then $\pi$ is admissible.

  By the third step, we may assume that $\pi$ is a subrepresentation of 
  $\Ind_{\widetilde P}^{\widetilde G}\sigma$ for some parabolic subgroup $\widetilde P=\widetilde M \underline{N}$ and a  cuspidal representation $\sigma$ of $\widetilde M$.
  By the second step, $\sigma$ is admissible.  Since $\widetilde G/\widetilde P$ is compact, it is easily seen that $\Ind_{\widetilde P}^{\widetilde G}\sigma$ is admissible (see
 \cite[Lemma III.2.3]{Renard}). It follows that $\pi$ is admissible.

\end{proof}

\begin{corollary}
\label{Cor-Pro-Covering}
Let   $\GG$ be a reductive linear algebraic group over a non archimedean local field 
$\kk.$  Let  $\widetilde{G}$ be a  central finite cover of $G= \GG(\kk)$ 
 satisfying Condition $(*)$ as above. Then   $\widetilde{G}$ is  trace class.
 \end{corollary}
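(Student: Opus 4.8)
The plan is to deduce the statement from Proposition~\ref{Pro-AdmissibleCover} by means of the criterion recorded in Proposition~\ref{Pro-AdmissibleCompactSubgroups}(i), after passing from smooth to unitary representations. First I would note that $\widetilde{G}$ is totally disconnected: since $G=\GG(\kk)$ is totally disconnected and $F=\ker p$ is finite, the covering map $p$ is a local homeomorphism, and indeed the proof of Proposition~\ref{Pro-Covering} exhibits a local identification of $p^{-1}(U)$ with $U\times F$ for small open $U\subseteq G$. Thus $\widetilde{G}$ is locally isomorphic to a product of an open subset of $G$ with the finite discrete group $F$, hence totally disconnected. This places $\widetilde{G}$ squarely within the framework of Proposition~\ref{Pro-AdmissibleCompactSubgroups} and Corollary~\ref{cor-CCR-trace-class}.

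The key step is to promote the admissibility of irreducible \emph{smooth} representations, supplied by Proposition~\ref{Pro-AdmissibleCover}, to admissibility of every irreducible \emph{unitary} representation. Given an irreducible unitary representation $(\pi,\Hi)$ of $\widetilde{G}$, I would pass to the subspace $\Hi^\infty$ of smooth vectors. This is a dense $\widetilde{G}$-invariant subspace which, by the standard theory for totally disconnected groups, carries an irreducible smooth representation. The crucial elementary observation is that, for every compact open subgroup $K$ of $\widetilde{G}$, each $K$-fixed vector is already fixed by the open subgroup $K$ and is therefore smooth; consequently $\Hi^K=(\Hi^\infty)^K$. By Proposition~\ref{Pro-AdmissibleCover} the space $(\Hi^\infty)^K$ is finite dimensional, and hence so is $\Hi^K$. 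Thus $\pi$ is admissible.

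Once every irreducible unitary representation of $\widetilde{G}$ is known to be admissible, Proposition~\ref{Pro-AdmissibleCompactSubgroups}(i) yields at once that $\widetilde{G}$ is trace class, which is the assertion. Equivalently, the projection $\pi(e_K)$ onto $\Hi^K$ then has finite rank for every compact open $K$, so $\widetilde{G}$ is CCR and Corollary~\ref{cor-CCR-trace-class} delivers the trace class property.

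I expect the only genuinely delicate point to be the smooth-to-unitary passage, and more precisely the verification that the smooth part $\Hi^\infty$ of an irreducible unitary representation remains irreducible as a smooth representation, so that Proposition~\ref{Pro-AdmissibleCover} is applicable to it; the identity $\Hi^K=(\Hi^\infty)^K$ itself is immediate from the fact that $K$-invariance forces smoothness. It is worth emphasising that the structural input of Proposition~\ref{Pro-Covering}, namely the Cartan-type decomposition $\widetilde G=\widetilde K_0\widetilde S\widetilde Z\widetilde \Omega\widetilde K_0$ together with the compact open subgroups $L$ and their compression properties, is not invoked here directly: it has already been consumed upstream, in the first step of the proof of Proposition~\ref{Pro-AdmissibleCover}.
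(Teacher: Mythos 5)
There is a genuine gap at precisely the step you flag as ``delicate'': the passage from smooth to unitary representations. For an irreducible unitary representation $(\pi,\Hi)$ of a totally disconnected group, the space $\Hi^\infty=\bigcup_K\Hi^K$ of smooth vectors is a smooth representation that is \emph{topologically} irreducible, but there is no a priori reason for it to be \emph{algebraically} irreducible: for each compact open $K$, the $\H(\widetilde G,K)$-module $\Hi^K$ is only a topologically irreducible $*$-representation, and if $\Hi^K$ is infinite dimensional it may well admit proper dense invariant subspaces. Since Proposition~\ref{Pro-AdmissibleCover} concerns algebraically irreducible smooth representations, it cannot be applied to $\Hi^\infty$ without first knowing that each $\Hi^K$ is finite dimensional --- which is exactly what you are trying to prove, so the argument is circular. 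Admissibility of all irreducible \emph{smooth} representations does not, by itself, yield admissibility of all irreducible \emph{unitary} representations; this transfer is the real content of the corollary, not a formal consequence. (Your identity $\Hi^K=(\Hi^\infty)^K$ is correct but does not help until irreducibility of $\Hi^\infty$ in the algebraic sense is established.)

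The paper closes this gap by a uniformity argument, and this is where Proposition~\ref{Pro-Covering} enters the proof of the corollary \emph{directly}, contrary to your closing remark that it has been ``consumed upstream.'' The decomposition $\widetilde G=\widetilde K_0\widetilde S\widetilde Z\widetilde\Omega\widetilde K_0$ together with the subgroups $L=L^+L^-$ and their compression properties under $\widetilde S$ show that Bernstein's Assertion A holds for $\widetilde G$; this gives an integer $N(L)$ with $\dim V^L\le N(L)$ for every \emph{admissible} irreducible smooth representation, and combined with Proposition~\ref{Pro-AdmissibleCover} it yields a \emph{uniformly} admissible smooth dual. Proposition~\ref{Pro-AdmissibleCompactSubgroups}.(ii) (the theorem of First and R\"ud) then transfers this uniform bound to the unitary dual --- the uniformity is essential for that transfer --- and the trace class property follows. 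To salvage your outline you would have to either prove directly that $\Hi^\infty$ is algebraically irreducible (which presupposes admissibility) or reinstate the uniform bound coming from Proposition~\ref{Pro-Covering}.
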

 \begin{proof}
 Let $L$ be a compact open subgroup of $\widetilde{G}$.
 Proposition~\ref{Pro-Covering} shows that Assertion A from \cite{Bernstein}
is satisfied for the totally disconnected locally compact group $\widetilde{G}$.
Hence, there exists an integer $N(L)\geq 1$ such that 
$\dim V^L \leq N(L)$ for every \emph{admissible}\ smooth representation
$(\pi, V)$ of $\widetilde{G}.$ Since, by Proposition~\ref{Pro-AdmissibleCover},
 every smooth representation of  $\widetilde{G}$ is admissible, 
 it follows that $\widetilde{G}$ has a uniformly admissible smooth dual.
Proposition~\ref{Pro-AdmissibleCompactSubgroups}.(ii) shows that
 $\widetilde{G}$  is therefore trace class.

\end{proof}

The metapletic group $\widetilde{Sp_{2n}(\kk)}$ satisfies Condition $(*)$ as above
(see \cite[Corollaire p. 38]{MVW}). The following result is therefore a direct consequence
of Corollary~\ref{Cor-Pro-Covering}.

\begin{corollary}
\label{Cor-Covering}
Let $\GG$ be a reductive algebraic subgroup of $Sp_{2n}$
and let $G= \GG(\kk).$ 
Let $\widetilde G$ be the inverse image of $G$ in  $\widetilde{Sp_{2n}(\kk)}$.
Then $\widetilde G$ is a trace class group.
\end{corollary}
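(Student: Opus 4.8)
The plan is to deduce Corollary~\ref{Cor-Covering} directly from Corollary~\ref{Cor-Pro-Covering}, since the heavy lifting has already been done there. The only thing to verify is that the group $\widetilde G$ in the statement satisfies the hypotheses of Corollary~\ref{Cor-Pro-Covering}, namely that it is a central finite cover of a reductive $\GG(\kk)$ satisfying Condition $(*)$.

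First I would observe that $\GG$, being a reductive algebraic subgroup of $Sp_{2n}$, is in particular a reductive linear algebraic group defined over $\kk$, so $G = \GG(\kk)$ falls under the scope of Corollary~\ref{Cor-Pro-Covering}. Next, the group $\widetilde{Sp_{2n}(\kk)}$ is the metaplectic group, a twofold (hence finite) central cover of $Sp_{2n}(\kk)$ by the Stone--von Neumann--Weil theory recorded in Theorem~\ref{Theo-StoneVN-Weil}(ii). Writing $q\colon \widetilde{Sp_{2n}(\kk)} \to Sp_{2n}(\kk)$ for the covering map, the group $\widetilde G := q^{-1}(G)$ is then a finite central extension of $G$: the kernel of $q|_{\widetilde G}\colon \widetilde G \to G$ is the central subgroup $\ker q \cong \ZZ/2\ZZ$ of $\widetilde{Sp_{2n}(\kk)}$, which lies in the center of $\widetilde G$, and $q|_{\widetilde G}$ is surjective onto $G$ by construction.

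The key remaining point is Condition $(*)$ for $\widetilde G$. Here I would invoke the fact, cited in the excerpt just before the corollary, that $\widetilde{Sp_{2n}(\kk)}$ itself satisfies Condition $(*)$ (see \cite[Corollaire p.~38]{MVW}): for all $\widetilde g, \widetilde h \in \widetilde{Sp_{2n}(\kk)}$ one has $\widetilde g \widetilde h = \widetilde h \widetilde g \Longleftrightarrow q(\widetilde g) q(\widetilde h) = q(\widetilde h) q(\widetilde g)$. Since $\widetilde G$ is a subgroup of $\widetilde{Sp_{2n}(\kk)}$ and the covering map of $\widetilde G$ onto $G$ is just the restriction of $q$, the equivalence in Condition $(*)$ for elements of $\widetilde G$ is inherited verbatim from the corresponding equivalence in $\widetilde{Sp_{2n}(\kk)}$. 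Thus $\widetilde G$ satisfies Condition $(*)$ as a central finite cover of $G$.

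Having verified all the hypotheses, Corollary~\ref{Cor-Pro-Covering} applies directly and yields that $\widetilde G$ is trace class, which is the assertion. I do not expect any genuine obstacle here: the statement is essentially a specialization of Corollary~\ref{Cor-Pro-Covering} to the metaplectic setting, and the only mild care needed is in checking that Condition $(*)$ passes from the ambient metaplectic group to the subgroup $\widetilde G$, which is immediate because both the commutation relations and the covering map restrict compatibly.
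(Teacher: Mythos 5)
Your proposal is correct and follows the paper's own route exactly: the paper likewise cites \cite[Corollaire p.~38]{MVW} for Condition $(*)$ on $\widetilde{Sp_{2n}(\kk)}$ and then deduces the corollary directly from Corollary~\ref{Cor-Pro-Covering}. You merely spell out the (immediate) verification that $\widetilde G$ is a finite central cover of $G$ inheriting Condition $(*)$, which the paper leaves implicit.
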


\section{Proofs}
\subsection{Proof of Theorem~\ref{MainTheorem}} 
\label{S2}

Let $\GG^0$ be the connected component of $\GG.$
Since $\GG^0(\kk)$ is a normal subgroup of finite index in $\GG(\kk)$, 
in view of Proposition~\ref{Prop-FiniteIndex}, it suffices to prove the claim
when $\GG$ is connected. 

Let $\UU$ be the unipotent radical  of $\GG$ and $\mathfrak{u}$ its Lie algebra.
Through a series of reduction steps, we will be lead eventually to the case where 
$\UU$ is a Heisenberg group.  

If $\mathfrak{u}=\{0\}$, then  $\GG$ is reductive and the
claim follows from \cite{Dixmier} and \cite{Bernstein}.

\vskip.2cm
\noindent
$\bullet$ \emph{First reduction step:} we  assume from now on that $\GG$ is connected and that {$\mathfrak{u}\neq \{0\}$.}

We proceed by induction on $\dim \GG.$ So, assume that $\dim \GG>0$ and the claim is proved for every connected algebraic group defined over $\kk$ with dimension strictly  smaller than $\dim \GG.$ 

Let $\rho$ be a factor representation  of $G:=\GG(\kk),$ fixed in the sequel.
We have to show that $\rho$ is of type I.

Let  $\mathfrak{m}$ be a non-zero $\GG$-invariant abelian ideal of 
 {$\mathfrak{u}$. (An} example of such an ideal is the center of $\mathfrak{u}$.)
 Then $\MM=\exp(\mathfrak{m})$ is a non-trivial  abelian algebraic normal subgroup of $\GG$ contained in $\UU$.   By Lemma~\ref{Lem-RegEmbedded}, $M:=\MM(\kk)$ is regularly embedded in $G.$
Hence, by Theorem~\ref{Mackey}.(ii), there exists $\chi\in \widehat{M}$ and 
a factor  representation $\pi$ of $G_\chi$ such that $\pi|_{M}$ is a multiple of $\chi$
and such that $\rho= \Ind_{G_\chi}^G \pi.$

 Recall that  $\GG$ acts $\kk$-rationally by the co-adjoint action of $\GG$ on 
$\mathfrak{m}^*$ and that $\chi$ is defined by a unique linear functional 
$f_\mathfrak{m}\in \mathfrak{m}^*$  (see Subsection~\ref{SS:Actions}).

Assume that either $\dim {\mathfrak{m}}\geq 2$ or $\dim{\mathfrak{m}}=1$
and $f_\mathfrak{m}=0$.
 In both cases,  there exists a non-zero subspace ${\mathfrak{k}}$ of $\mathfrak{m}$ 
contained in $\ker (f_{\mathfrak{m}}).$ Let $\KK$ be the corresponding  
algebraic normal subgroup of $\GG.$
Then $\rho$ factorizes through the group of $\kk$-points
of the connected algebraic group $\GG/\KK$ and $\dim \GG/\KK<\dim \GG.$
Hence, $\rho$ is of type I, by the induction hypothesis.

\vskip.2cm
\noindent
$\bullet$ \emph{Second reduction step:}  we  assume from now on that
$\mathfrak{u}$ contains  no $\GG$-invariant ideal $\mathfrak{m}$ with  $\dim {\mathfrak{m}}\geq 2$ or 
with $\dim {\mathfrak{m}}=1$
and such that $f_\mathfrak{m}=0$.
In particular, we have $\dim \mathfrak{z}=1$ and $f_\mathfrak{z}\neq 0,$
where $\mathfrak{z}$ is
 the center of $\mathfrak{u}.$

Set $\ZZ=\exp(\mathfrak{z})$  and set $Z=\ZZ(\kk)$.
Let  $\chi\in \widehat{Z}$ be the character corresponding to $f_\mathfrak{z}$
and let $\pi$  be a factor  representation of $G_\chi$ such that $\pi|_{Z}$ is a multiple of $\chi$ and such that $\rho= \Ind_{G_\chi}^G \pi.$

Assume that $\dim \GG_\chi<\dim \GG.$
Then, by induction hypothesis, the group $\GG^0_\chi(\kk)$ of the 
$\kk$-points of the connected component of $\GG_\chi$ is of type I. 
Since $\GG^0_\chi(\kk)$ has finite index in $G_\chi$, it follows
from Proposition~\ref{Prop-FiniteIndex} that $G_\chi$ is of type I. Hence, $\rho$ is of type I 
by Theorem~\ref{Mackey}.(i). 

\vskip.2cm
\noindent
$\bullet$ \emph{Third reduction step:}  we  assume from now on that
$ \dim \GG_\chi=\dim \GG$ for $\chi\in \widehat{Z}$  as above. Then $\GG_\chi$ has finite index in  $\GG$ and hence   $\GG_\chi=\GG$, since $\GG$ is connected.
So, $G_\chi=G$ and in  particular $\rho=\pi.$

Observe that, since $\chi\neq 1_Z$ and since $\dim \ZZ=1,$ it follows
that $\GG$ acts trivially on $\ZZ.$ 
We claim that $\mathfrak{u}$ is not abelian. 
Indeed, otherwise, we would have $\mathfrak{u}=\mathfrak{z}$
and $\GG$ would act trivially on $\mathfrak{u};$ so, $\GG$  would be reductive, in contradiction to the  first  reduction step.

We claim that $\dim [\mathfrak{u},\mathfrak{u}]= 1.$
Indeed, assume, by contradiction,  that $\dim [\mathfrak{u},\mathfrak{u}]\geq 2.$
Then, setting $\mathfrak{u}':=[\mathfrak{u},\mathfrak{u}],$  there exist  ideals $\mathfrak{u}_1, \mathfrak{u}_2$ in $\mathfrak{u}$ of dimension $1$ and $2$
such that $\mathfrak{u}_1 \subset \mathfrak{u}_2 \subset \mathfrak{u}'$ {and $[\mathfrak{u},\mathfrak{u}_2]\subset \mathfrak{u}_1$.}
Since
$$
[\mathfrak{u'},\mathfrak{u}_2]\subset [[\mathfrak{u},\mathfrak{u}_2],\mathfrak{u}]+
[[\mathfrak{u}_2,\mathfrak{u}],\mathfrak{u}] {\subset}[\mathfrak{u}_1,\mathfrak{u}]=0,
$$
the center $\mathfrak{z}'$ of $\mathfrak{u'}$ is of dimension $\geq 2.$
As $\mathfrak{z}'$ is a characteristic ideal, $\mathfrak{z}'$ is $\GG$-invariant
and this is a contradiction to the  second reduction step.

 In view  of Lemma~\ref{Lem-Heisenberg}, we can state our last reduction step.
 
\vskip.2cm
\noindent
$\bullet$ \emph{Fourth  reduction step:}  we  assume from now on that
 $\UU$ is the Heisenberg group $H_{2n+1}$ for some $n\geq 1$ and that 
$\chi\neq 1_Z.$

By Theorem~\ref{Theo-StoneVN-Weil}.(i),
there exists a  unique irreducible representation $\pi_\chi$ of $H_{2n+1}(\kk)$
such that $\pi_\chi|_{Z}$ is a multiple of $\chi$.

Let $\LL$ be a Levi subgroup 
of $\GG$ defined over $\kk.$ So, $\LL$ is reductive and  $\GG= \LL\ltimes \UU.$
By Proposition~\ref{Pro-AutHeisenberg}, {the action of $G$ on 
$H_{2n+1}(\kk)$ by conjugation gives rise to a continuous homomorphism
$\vfi: G\to Sp_{2n}(\kk)\ltimes H_{2n+1}(\kk).$}

The representation $\pi_\chi$ of $H_{2n+1}(\kk)$ extends to an
$\omega\circ (p\times p)$-repre\-sen\-tation $\widetilde{\pi_\chi}$ of $Sp_{2n}(\kk)\ltimes H_{2n+1}(\kk)$
for some $\omega \in Z^2(Sp_{2n}(\kk), \mathbf{T}),$
where $p:Sp_{2n}(\kk)\ltimes H_{2n+1}(\kk)\to Sp_{2n}(\kk)$ is the canonical projection.
By Theorem~\ref{Theo-StoneVN-Weil}.{(ii)},
one can choose $\omega$ so that the image of $\omega$ 
is $\{\pm 1\}$ and the 
corresponding central extension of $Sp_{2n}(\kk)$
is the metaplectic group $\widetilde{Sp_{2n}(\kk)}.$
Setting 
$$\omega':=\omega\circ (p\times p) \circ (\vfi\times\vfi)\in Z^2(G, \mathbf{T})$$
and $\widetilde{\pi}':=\widetilde{\pi_\chi} \circ \vfi,$
we have that  $\widetilde{\pi}'$ is an $\omega'$-representation
of $G$ which extends $\pi_\chi$. 

Let $\sigma$ be a factor $\omega'$-representation
of $L$ lifted to $G.$ (Observe that $\omega'^{-1}= \omega'$.)
Let $ L^{\omega'}= \{\pm 1\}\times L$ be the  central extension of 
 $L$ defined by $\omega'$.

  We claim that $\widetilde{L}$ satisfies Condition $(*)$ as before Proposition~\ref{Pro-Covering}. Indeed,
let  $x,y\in L$ be such that $xy=yx$; then $\vfi(x)\vfi(y)= \vfi(y) \vfi(x)$
 and hence  
 $$\omega'(x,y)=\omega(\vfi(x),\vfi(y))=\omega(\vfi(y),\vfi(x))=\omega'(y,x),$$
  since Condition $(*)$ holds for  $\widetilde{Sp_{2n}(\kk)}.$
  This proves the claim.  
  
 It follows from Corollary~\ref{Cor-Pro-Covering} that  $\widetilde{L}$
 is of type I. 
Theorem~\ref{Mackey}.(iii) then implies that 
the factor representation $\rho$ of $G$ is of type $I.$

\subsection{Proof of Theorem~\ref{SecondTheorem}} 
\label{S2-SS1}

As mentioned before, the fact that  
(i) implies  (ii) was already shown by Lipsman in \cite[Theorem 3.1 and Lemma 4.1]{Lipsman-CCR}. 
\magenta{}

We claim that (ii) implies  (i).
Indeed,  let $\MM$ be as in Theorem~\ref{SecondTheorem}. 
Then $\MM$ is an algebraic  normal subgroup of the reductive group $\LL$
and is therefore reductive as well (see \cite[Corollary in \S 14.2]{Borel}).
 Since $\MM(\kk)$ commutes with all elements in $\UU(\kk)$, it follows that 
$$N:=\MM(\kk)\UU(\kk)\cong \MM(\kk)\times \UU(\kk)$$
 is a direct product of groups.
Now,  $\MM(\kk)$ and $\UU(\kk)$ are CCR (even trace class) by the results mentioned in the introduction. Hence,  $N$ is CCR as well (and by  \cite[Proposition 1.9]{Deitmar-vanDijk} even trace class).
Since $\GG(\kk)/N\cong \LL(\kk)/\MM(\kk)$ is compact, it follows from 
 a general result  (see \cite[Proposition 4.3]{Schoch}) that  $\GG(\kk)$ is CCR.

It remains to show that (ii) implies (iii).

If  $\kk$ is non-archimedean then $G=G(\kk)$ is totally disconnected and CCR and 
it follows from Corollary \ref{cor-CCR-trace-class} that $G$ is trace class.
The proof in the archimedean case is more involved.

Assume that $\kk=\RRR$ and that  (ii) holds. 
Let 
$$N:=\MM(\RRR)\UU(\RRR)\cong \MM(\RRR)\times \UU(\RRR)$$ be as above. We already observed above
that $N$ is trace class and that $G/N$ is compact, where $G=\GG(\RRR)$. 
Note that $N$ is unimodular, since $\MM(\RRR)$ and $\UU(\RRR)$ are unimodular.
Since, moreover, $G/N$ is compact,
 it follows  from Weil's formula
$$\int_G f(g)\, dg=\int_{G/N} \int_N f(gn)\,dn\, d \dot{g},\quad f\in C_c(G)$$
that $G$ is unimodular as well.

Since subrepresentations  of trace class representations are trace class
and  in view of Corollary \ref{lem-subrep}, it suffices to show
that, for every $\pi\in \widehat{N},$ the induced representation $\rho=\Ind_N^G\pi$ is a (not necessarily irreducible) 
trace class representation of $G$, i.e., the operator $\rho(f)$ is a trace-class operator for all $f\in C_c^{\infty}(G)$.

Recall  that, since $G/N$ is compact and both $G$ and $N$ are unimodular,
the Hilbert space $\H_\rho$ for the induced representation $\rho=\Ind_N^G\pi$ is a completion of the 
vector space
\begin{equation}\label{eq-indspace}
\mathcal F_\rho=\{\xi\in C(G,\H_\pi):  \xi(gn)=\pi(n^{-1})\xi(g)\quad \forall g\in G\; \forall n\in N\},
\end{equation}
with inner product given by
$\langle \xi,\eta\rangle=\int_{G/N} \langle \xi(g), \eta(g)\rangle \, d\dot{g}.$
The induced representation $\rho=\Ind_N^G\pi$ acts on the dense subspace $\F_\rho$ by 
the formula $(\rho(s)\xi)(g)=\xi(s^{-1} g).$

Now let $f\in C_c^{\infty}(G)$. Following some ideas in the proof of
\cite[Proposition 4.2]{Schoch}, we compute for $\xi\in \mathcal F_\rho$:
\begin{align*}
\big(\rho(f)\xi\big)(g)&=\int_G f(s)\xi(s^{-1}g)\, ds
\stackrel{s\mapsto gs^{-1}}{=} \int_G f(gs^{-1})\xi(s)\, ds\\
&=\int_{G/N}\int_N f(gn^{-1}s^{-1})\xi(sn)\, dn \, d\dot{s}\\
&\stackrel{n\mapsto n^{-1}}{=} \int_{G/N}\int_N f(gns^{-1})\xi(sn^{-1})\, dn \, d\dot{s}\\
&= \int_{G/N} \int_Nf(gns^{-1})\pi(n)\xi(s)\, dn \, d\dot{s}\\
&=\int_{G/N} k_f(g, s) \xi(s)\, d\dot{s},
\end{align*}
where $k_f:G\times G\to \mathcal B(\H_\pi)$ is given by
$k_f(g,s)=\pi(\varphi_f(g,s))$ and  
$\varphi_f:G\times G\to C_c^{\infty}(N)$ is given by $\varphi_f(g,s)(n):=f(gns^{-1})$. 

Since $\pi$ is trace class, 
the map $k_f=\pi\circ \varphi_f$ takes its  values in the space of
trace-class operators.
As trace class operators are  Hilbert-Schmidt operators, we may regard 
$k_f$ as a map into the 
set $\mathcal{H}S(\H_\pi)$ of Hilbert-Schmidt operators on $\H_\pi$.

We claim that the map $G\times G\to \|k_f(g,s)\|_{\mathrm{HS}}$ is continuous, where  $\|\cdot\|_{\mathrm{HS}}$
denotes the  Hilbert-Schmidt norm.
This will follow from \cite[Proposition 1.4]{Deitmar-vanDijk} as soon as we have shown that the map
$$\varphi_f:G\times G\to C_c^{\infty}(N)$$ is continuous,
where $C_c^\infty(N)$ is equipped with the Fr\'echet space  topology as 
introduced in \cite[Definition 1.1]{Deitmar-vanDijk}. But this is a consequence of the  continuity of  the map   $C_c^\infty(G)\to C_c^\infty(N),\varphi\mapsto \varphi|_N$  and of 
the map $G\times G\to C_c^\infty(G), (g,s)\mapsto L_sR_gf $, where $L_s$  (resp. $R_g$) 
denote left (resp. right) translation by $s$ (resp. $g$).

For $k,m\in N$, we have
\begin{align*}
k_f(gk,sm)&=\int_N f(gknm^{-1}s^{-1})\pi(n)\, dn\\
&\stackrel{n\mapsto k^{-1}nm}{=}  \int_N f(gns^{-1})\pi(k^{-1}nm)\, dn\\
&=\pi(k^{-1})k_f(g,s)\pi(m).
\end{align*}
It follows that  the Hilbert-Schmidt norm of $k_f(g,s)$ is constant on $N$-cosets 
in both variables.  As $G/N$ is compact, this implies that 
the continuous map $(g,s)\mapsto \|k_f(g,s)\|_{\HS}$ is bounded.

We want to conclude from this that $\rho(f)$ is a Hilbert-Schmidt operator. For this, let
$c:G/N\to G$ be a Borel section for the quotient map $p:G\to G/N$. Let
$V: \H_\rho\to L^2(G/N, \H_\pi)$ be defined on $\F_\rho$ by $\xi\mapsto V\xi:=\xi\circ c$.
Then it is straightforward to check  that $V$ is a unitary operator which intertwines $\rho(f)$ with the integral operator
$K_f:L^2(G/N,\H_\pi)\to L^2(G/N, \H_\pi)$ given by
 $$(K_f\xi)(\dot g)=\int_{G/N} \tilde{k}_f(\dot g, \dot s)\xi(\dot s)\, d\dot s,$$
 where $\tilde{k}_f(\dot g, \dot s):=k_f(c(\dot g), c(\dot s))$ for $\dot g, \dot s\in G/N$.
 
It suffices to prove that $K_f$ is Hilbert-Schmidt. For this,
let $\{e_i: i\in I\}$ and $\{v_j:j\in J\}$ be orthonormal bases 
of $L^2(G/N)$ and $\H_\pi$, respectively. 
Observe that $I$ and $J$ are at most countable,
since $L^2(G/N)$ and $\H_\pi$ are separable, by the second countability of
$G.$

For a pair $(j,l)\in J\times J,$ 
denote by 
$$k_{jl}(\cdot, \cdot):=\langle \tilde{k}_f(\cdot, \cdot) v_j, v_l\rangle$$ 
the $(j,l)$-th matrix coefficient of $\tilde k_f$ with respect to $\{v_j:j\in J\}$. 
Identifying elements $g\in G$ with their images in $G/N$ in the following formulas, we then get
\begin{equation}\label{eq-1}
\langle \tilde k_f(g,s) v_j, \tilde k_f(g,t) v_j\rangle =\sum_{l} k_{jl}(g,s)\overline{k_{jl}(g,t)}
\end{equation}
 for $g,s,t\in G$. Using this and the fact that $\{\bar{e_i}: i\in I\}$ is also an orthonormal basis of $L^2(G/N)$, we compute
 \begin{align*}
 \|K_f\|_{\HS}^2&=\sum_{i,j}\langle K_f(e_i\otimes v_j), K_f (e_i\otimes v_j)\rangle\\
 &=\sum_{i,j}\int_{G/N\times G/N\times G/N} \left\langle \tilde k_f(g,  s) e_i(s)v_j,  \tilde k_f( g,  t) e_i(t)v_j\right\rangle \,d( g, s,  t)\\
 &\stackrel{(\ref{eq-1})}{=}\sum_{i,j}\int_{G/N\times G/N\times G/N} e_i(s) \overline{e_i(t)} \sum_l k_{jl}(g,s)\overline{k_{jl}(g,t)} \, d(g,s,t)\\
 &=\sum_{i,j,l} \int_{G/N}\left\langle k_{jl}(g, \cdot), \bar{e}_i\right\rangle \left\langle \bar{e_i}, k_{jl}(g, \cdot )\right\rangle \,dg\\
 &=\sum_{j,l} \int_{G/N} \|k_{jl}(g, \cdot)\|_2^2\, dg\\
 &=\sum_{j,l} \|k_{jl}\|_2^2 
 =\sum_{j,l} \int_{G/N\times G/N} | k_{jl}(g,s)|^2\, d(g,s)\\
 &\stackrel{(\ref{eq-1})}{=}\int_{G/N\times G/N} \sum_j\left\langle \tilde k_f(g,s) v_j, \tilde k_f(g,s) v_j\right\rangle \, d(g, s)\\
 &=\int_{G/N\times G/N} \| \tilde k_f(g,s)\|^2_{\HS}\, d(g,s)
 <\infty.
 \end{align*}
It follows that $\rho(f)$ is a Hilbert-Schmidt operator for all $f\in C_c^\infty(G)$, and since 
$C_c^\infty(G)=C_c^\infty(G)*C_c^\infty(G)$ by \cite[Theorem 3.1]{DM}, this implies that $\rho(f)$ is trace class for all $f\in C_c^\infty(G).$
This finishes the proof.

\end{document}